\theoremstyle{plain}
\newtheorem{theorem}{Theorem}[section]
\newtheorem{lemma}{Lemma}[section]
\newtheorem{proposition}{Proposition}[section]
\newtheorem{corollary}{Corollary}[section]
\theoremstyle{definition}
\newtheorem{definition}{Definition}[section]
\newtheorem{example}{Example}[section]
\newcommand{\cat}[1]{{\mathbf{\mathsf{{#1}}}}}
\newcommand{\cok}{\mathrm{cok\;}}
\newcommand{\Cok}{\mathrm{Cok\;}}
\newcommand{\Ker}{\mathrm{Ker\;}}
\newcommand{\Nim}{\mathrm{Nim\;}}
\renewcommand{\Im}{\mathrm{Im\;}}
\renewcommand{\hom}{\cat{Hom}}
\newcommand{\setof}[1]{\left\{ {#1} \right\}}
\newcommand{\define}[1]{\textbf{#1}}
\newcommand{\id}{\text{id}}
\renewcommand{\epsilon}{\varepsilon}
\renewcommand{\phi}{\varphi}
\renewcommand{\mapsto}{\rightsquigarrow}
\newcommand{\meet}{\wedge}
\newcommand{\join}{\vee}
\newcommand{\bigmeet}{\bigwedge}
\newcommand{\bigjoin}{\bigvee}
\newcommand{\nats}{\mathbb{N}}
\renewcommand{\restriction}[2]{{% we make the whole thing an ordinary symbol
  \left.\kern-\nulldelimiterspace % automatically resize the bar with \right
  #1 % the function
  \vphantom{|} % pretend it's a little taller at normal size
  \right|_{#2} % this is the delimiter
  }}
\newcommand{\sbt}{\,\begin{picture}(1,1)(-1,-2)\circle*{2}\end{picture}\ }
\newcommand{\fc}{\trianglelefteqslant}
\newcommand{\fcop}{\trianglerighteqslant}
\newcommand{\resmap}[3]{\mathcal{#1}_{#2 \fc #3}}
\newcommand{\resmaplower}[3]{(\mathcal{#1}_{\!#2 \fc #3})_{\sbt}}
\newcommand{\resmapupper}[3]{(\mathcal{#1}_{#2 \fc #3})^{\sbt}}
\newcommand{\pwrset}[1]{2^{#1}}
\newcommand{\powerset}[1]{\pwrset{#1}}
\newcommand{\covers}{\sqsubset}
\newcommand{\lattice}[1]{\mathbf{#1}}
\newcommand{\poset}[1]{\mathrm{#1}}
\newcommand{\height}{h}
\newcommand{\upset}[1]{\uparrow\!{#1}}
\newcommand{\downset}[1]{\downarrow\!{#1}}
\newcommand{\fixedpoint}{\mathrm{Fix}}
\newcommand{\Space}[1]{{\mathrm{#1}}}  % MAYBE \mathcal?  I'M FRUSTRATED WITH THIS FONT
\newcommand{\Laplacian}{L}
\newcommand{\boundary}{\partial}
\newcommand{\coboundary}{\delta}
\newcommand{\Flow}{\Phi}
\newcommand{\TarH}{T\!H} % tarski cohomology
\newcommand{\GraH}{G\!H} % grandis cohomology
\DeclareMathOperator*{\limit}{lim}
\newcommand{\diagonal}{\Delta}
\newcommand{\codiagonal}{\nabla}
\newcommand{\qcoboundary}{\tilde{\coboundary}}
\newcommand{\HodHup}{{}_{+}H\!H}
\newcommand{\HodHdown}{{}_{-}H\!H}
\xpretocmd{\footnote}{\unskip}{}{}
\newcommand{\sheaf}[1]{\underline{\mathcal{#1}}}
\begin{document}
\author{Robert Ghrist\thanks{
	Department of Mathematics, Department of Electrical \& Systems Engineering \newline
	\hspace*{1.5em} University of Pennsylvania \newline
	\hspace*{1.5em} \texttt{ghrist@math.upenn.edu}
%	\hspace*{1.5em} David Rittenhouse Lab \newline
%	\hspace*{1.5em} 209 S.~33rd Street \newline
%	\hspace*{1.5em} Philadelphia, PA 19104
} \and Hans Riess\thanks{
	Department of Electrical \& Systems Engineering \newline
	\hspace*{1.5em} University of Pennsylvania \newline
	\hspace*{1.5em} \texttt{hmr@seas.upenn.edu}
%	\hspace*{1.5em} Levine Hall \newline
%	\hspace*{1.5em} 3330 Walnut Street \newline
%	\hspace*{1.5em} Philadelphia, PA 19104
}}	
\date{}	
%@@@@@@@@@@@@@@@@@@@@@@@@@@@@@@@@@@@@@@@@@@@@@@@@@@@@@@@@@@@@@@@@@@@@@@@@@@@@@@@@@@@@@@@@@@@@@@@@@@@@@@@@@@@@@	
%\author{Robert Ghrist}
%\email{ghrist@math.upenn.edu}
%\address{University of Pennsylvania \\
%Departments of Mathematics\\
%Department of Electrical \& Systems Engineering \\
%David Rittenhouse Lab \\
%209 South 33rd Street \\
%Philadelphia, PA 19104}

%\author{Hans Riess}
%\email{hmr@seas.upenn.edu}
%\address{University of Pennsylvania \\
%Department of Electrical \& Systems Engineering \\
%Levine Hall \\
%3330 Walnut Street \\
%Philadelphia, PA 19104}
%
%
%%\thanks{The authors were supported by Office of Naval Research (Grant
%%No. N00014-1442-16-1-2010).}
%\classification[2020]{18B35, 18F20, 18G90, 55N30, 55N31, 05C50.}
%\keywords{Cellular sheaves, lattice theory, non-abelian homological algebra}
%
\title{Cellular Sheaves of Lattices and the Tarski Laplacian}
%
%\received{Month XX, XXXX}   % receive date (for example: October 11, 1999)
%\revised{Month XX, XXXX}    % date of revision; omit, if no revision;
%                             % if multiple revisions, separate by commas
%%\accepted{Month Day, Year}  % acceptance date
%\published{Month XX, XXXX}  % publish date
%\submitted{Peter Bubenik}      % Name of Journal's Editor, who handled Article 
%\volumeyear{2020} % Volume Year
%\volumenumber{11} % Volume Number 
%\issuenumber{1}   % Issue Number
%\startpage{1}     % PageNumber of first page
%\articlenumber{1} % Sequence number of article within issue

% If copyright is retained by author, comment this out:
%\owner{International Press}

\maketitle

\begin{abstract}
	This paper initiates a discrete Hodge theory for cellular sheaves taking values in a category of lattices and Galois connections. The key development is the \emph{Tarski Laplacian}, an endomorphism on the cochain complex whose fixed points yield a cohomology that agrees with the global section functor in degree zero. This has immediate applications in consensus and distributed optimization problems over networks and broader potential applications.
\end{abstract}

% @@@@@@@@@@@@@@@@@@@@@@@@@@@@@@@@@@@@@@@@@@@@@@@@@@@@@@@@@@@@@@@@@@@@@@@@@@@@@@@@@@@@@@@@@@@@@@@@@@@@@@@@@@@@@@@@@
\section{Introduction}
\label{sec:introduction}
% @@@@@@@@@@@@@@@@@@@@@@@@@@@@@@@@@@@@@@@@@@@@@@@@@@@@@@@@@@@@@@@@@@@@@@@@@@@@@@@@@@@@@@@@@@@@@@@@@@@@@@@@@@@@@@@@@

The goal of this paper is to initiate a theory of sheaf cohomology for cellular sheaves valued in a category of lattices. Lattices are algebraic structures with a rich history \cite{rota1997many} and a wide array of applications \cite{denning1976lattice,barthelemy1991formal,farley2003breaking,sandhu1993lattice,monjardet2004lattices,erdmann2017topology}. Cellular sheaves are data structures that stitch together algebraic entities according to the pattern of a cell complex \cite{shepard1985cellular}. Sheaf cohomology is a compression that collapses all the data over a topological space --- or cell complex --- to a minimal collection that intertwines with the homological features of the base space \cite{iversen2012cohomology}. 

% ---------------------------------------------------------------------------------------------------
\subsection{Contributions}
\label{sec:contribs}
% ---------------------------------------------------------------------------------------------------

Our approach is to set up a Hodge-style theory, developing analogues of the combinatorial Laplacian adapted to sheaves of lattices. Specific contributions of this work include the following.
\begin{enumerate}
  \item In \S\ref{sec:background}, we review posets, lattices, lattice connections, cellular sheaves, and Hodge theory. It is nontrivial to define cohomology for sheaves valued in the (nonabelian) category of lattices and connections. 
  \item In \S\ref{sec:defnprop}, we define an endomorphism on cochains of a cellular sheaf of lattices and begin arguing that this \define{Tarski Laplacian}, $\Laplacian$, is a reasonable candidate for a diffusion operator. 
  \item In \S\ref{sec:FPT}, we prove the main result that $(\id\meet\Laplacian)$ has fixed point set equal to the quasi-sublattice of global sections of the sheaf. 
  \item In \S\ref{sec:diffusion}, we show when the resulting discrete-time \define{harmonic flow} projects arbitrary 0-cochains to global sections.
  \item Interpreting global sections as zeroth cohomology of the sheaf, in \S\ref{sec:TC} we define higher \define{Tarski cohomology} in terms of fixed points on higher cochains. 
  \item In \S\ref{sec:grandis}, we compare and contrast this cohomology with that implicit in the works of Grandis in the case of sheaves of lattices that factor through sheaves of vector spaces.  
  \item Finally, in \S\ref{sec:HC}, we attempt to build a Hodge Laplacian and cohomology theory directly from a (pseudo) cochain complex, comparing and contrasting with the Tarski theory. 
\end{enumerate}
The results of these inquiries are summarized in Table \ref{table:1} of \S\ref{sec:summary}.

% ---------------------------------------------------------------------------------------------------
\subsection{Motivations}
\label{sec:motives}
% ---------------------------------------------------------------------------------------------------

Readers who need no motivation for Hodge-theoretic sheaf cohomology valued in lattices may skip this subsection. 

The authors are motivated by certain problems in data science of a local-to-global nature, in which multiple instances of local data are required to satisfy constraints based on a notion of proximity. Such problems often can be formalized in the language of sheaf theory, where the local data are stored in stalks, and local constraints are encoded in restriction maps. The cohomology of a sheaf collates global information, such as global sections and obstructions to such. 

Certain sheaves prominent in applications take values in sets. For example, Reeb graphs can be viewed in terms of global sections of (co)sheaves valued in finite sets \cite{de2016categorified, curry2018fiber}. Other examples arising from problems in quantum computation can be found in the works of Abramsky et al. \cite{abramsky2011sheaf}. Applications closer to engineering are present in the pioneering work of Goguen \cite{goguen1992sheaf} and in more recent work of others \cite{robinson2014topological, robinson2017sheaves}. The lack of a full sheaf cohomology theory in these settings limits applicability. 

Recently, there has been substantial activity in cellular sheaves (and dual cosheaves), prompted by the thesis of Curry \cite{curry2014sheaves}. The theory of cellular sheaves valued in vector spaces is especially well-developed, and their cohomology is not difficult to define or compute \cite{ghrist2014elementary,curry2016discrete}. Cellular sheaves of vector spaces and their cohomology have been used in network flow and coding problems \cite{coding2011ghrist}, persistent homology computation \cite{ghrist2020persistence}, signal processing \cite{robinson2013understanding}, distributed optimization \cite{hansen2019distributed}, opinion dynamics \cite{hansen2020opinion}, and more. 

Our motivations for working with sheaves of lattices stems from both their generality (ranging from lattices of subgroups to Boolean algebras) and broad applicability in logic, topology, and discrete mathematics. The reader need look no further than computational topology for recent work harnessing lattice theory (e.g.\ in computational Conley theory \cite{harker2018computational}, classifying embeddings factoring through a Morse function \cite{catanzaro2020moduli}, and computing interleavings of generalized persistence modules \cite{botnan2020relative}). The desire for a Hodge theory comes from more than mere computation of sheaf cohomology. For a sheaf of vector spaces, the Hodge Laplacian has numerous applications, as detailed in the thesis of Hansen \cite{hansen2020laplacians}. As a generalization of the graph Laplacian, the Hodge Laplacian for sheaves of vector spaces has a rich spectral theory \cite{hansen2019toward} and leads to notions of harmonic extension that are useful in several contexts \cite{hansen2020opinion, hansen2020laplacians}. 

Among the many contingent avenues for applications, one is of special note. Graph signal processing --- the extension of signal processing methods from signals over the reals to signals over graphs --- has of late been a vibrant hive of activity, all based on the graph Laplacian, and all amenable to the Hodge-theoretic approach to sheaves over networks. Lifting graph signal processing from real-valued to lattice-valued data is an intriguing concept, with only the first steps being imagined by P{\"u}schel et al,~for functionals on semilattices \cite{puschel2019discrete} and powersets \cite{puschel2018discrete}. This paper provides the technical background for establishing graph signal processing valued in lattices. Other potentialities, especially those concerning deep learning and convolutional neural nets wait in the wings. 

% @@@@@@@@@@@@@@@@@@@@@@@@@@@@@@@@@@@@@@@@@@@@@@@@@@@@@@@@@@@@@@@@@@@@@@@@@@@@@@@@@@@@@@@@@@@@@@@@@@@@@@@@@@@@@@@@@
\section*{Acknowledgments}
S.\ Krishnan initially suggested to the authors a fixed-point type theorem for sheaf cohomology.
Conversations with G.\ Henselman-Petrusek, J. Hansen, K.\ Spendlove, and J.-P.\ Vigneaux were helpful. We would also like to thank the reviewers for their insightful comments.
This work was funded by the Office of the Assistant Secretary of Defense Research \& Engineering through a Vannevar Bush Faculty Fellowship, ONR N00014-16-1-2010.
% @@@@@@@@@@@@@@@@@@@@@@@@@@@@@@@@@@@@@@@@@@@@@@@@@@@@@@@@@@@@@@@@@@@@@@@@@@@@@@@@@@@@@@@@@@@@@@@@@@@@@@@@@@@@@@@@@

% @@@@@@@@@@@@@@@@@@@@@@@@@@@@@@@@@@@@@@@@@@@@@@@@@@@@@@@@@@@@@@@@@@@@@@@@@@@@@@@@@@@@@@@@@@@@@@@@@@@@@@@@@@@@@@@@@
\section{Background}
\label{sec:background}
% @@@@@@@@@@@@@@@@@@@@@@@@@@@@@@@@@@@@@@@@@@@@@@@@@@@@@@@@@@@@@@@@@@@@@@@@@@@@@@@@@@@@@@@@@@@@@@@@@@@@@@@@@@@@@@@@@

The following is terse but sufficient for the remainder of the paper. The reader may find more detailed references for lattice theory \cite{davey2002introduction,roman2008lattices,gierz2012compendium}, cellular sheaves \cite{curry2014sheaves}, and their Hodge theory \cite{hansen2019toward}. There is some variation in terminology among references in lattice theory. {\em Caveat lector.}

% ---------------------------------------------------------------------------------------------------
\subsection{Posets and Lattices}
\label{sec:lattices}
% ---------------------------------------------------------------------------------------------------

A partially ordered set, or \define{poset}, is a set $\poset{P}$ with a binary relation, $\preceq$, satisfying \emph{reflexivity}, $x \preceq x$, \emph{transitivity}: $x \preceq y$ and $y \preceq z$ implies $x \preceq z$, and \emph{anti-symmetry}: $x \preceq y$ and $y \preceq x$ implies $x = y$. Denote a \emph{strict} partial order, $x \prec y$, if $x \preceq y$, but $y \npreceq x$.

Given two elements $x$ and $y$ of a poset $\poset{P}$, define the \define{meet}, $x \meet y$, and  \define{join}, $x \join y$, to be the \emph{greatest lower bound} and \emph{least upper bound} respectively. That is,
\[
    x \meet y = \max \setof{z: z \preceq x, z \preceq y}
  \quad : \quad  
     x \join y = \min \setof{z: z \succeq x, z \succeq y}.
\]
Likewise, for any subset $S \subseteq \poset{P}$, we may define $\bigwedge S$ and $\bigvee S$ whenever they exist.

A \define{lattice} is a poset $\lattice{X}$ closed under all finite (possibly empty) meets and joins. A lattice is \define{complete} if all arbitrary meets and joins exist (finite lattices thus being complete). By this definition, $0 = \join~\emptyset$ and $1 = \meet~\emptyset$ exist, so that all lattices have maximal ($1$) and minimal ($0$) elements. Such lattices are sometimes called \emph{bounded lattices} in the literature: our convention is such that all lattices in this paper are bounded.

An element $x \in \lattice{X}$ is \define{join irreducible} if $x \neq 0$ and $x = y \join z$ implies $x = y$ or $x = z$.
An element $x \in \lattice{X}$ is \define{meet irreducible} if $x \neq 1$ and $x = y \meet z$ implies $x = y$ or $x = z$.
A lattice can be defined either algebraically or by its partial order. From the binary operations join and meet, we can recover the partial order by $x \preceq y ~ \Leftrightarrow ~ x \join y = y ~ \Leftrightarrow ~ x \preceq y \Leftrightarrow x \meet y = x$.

For $x \in \poset{P}$, a poset, define the \define{principal downset} of x, $\downset{x} =   \{y \in P: \\ y \preceq x \}$, and the \define{principal upset} of $x$, $\upset{x} = \setof{y \in L:~y \succeq x}$. A subset $D \subseteq \poset{P}$ is a \define{downset} if for all $x \in D$, if $y \preceq x$, then $y \in D$. A subset $U \subseteq \poset{P}$ is an \define{upset} if for all $x \in U$, if $y \succeq x$, then $y \in U$. 

An \define{interval} in a poset is a set, $\left[ x, y \right] = \: \downset{y} \: \cap \: \upset{x}$. A \define{sublattice} $\lattice{S} \subseteq \lattice{X}$ is a subset closed under meets and joins, including $0$ and $1$. It is more common to use a weaker notion of a sublattice. Define a \define{quasi-sublattice} $\lattice{Q} \subseteq \lattice{X}$ to be a subset that is a lattice in its own right, in particular, not necessarily containing $0$ or $1$. The product of a family of lattices $\setof{\lattice{X}_\alpha}_{\alpha \in J}$ is the cartesian product $\prod_{\alpha \in J} \lattice{X}_\alpha$ with meets and joins defined coordinate-wise. We denote an element of the product $\mathbf{x} \in\prod_{\alpha \in J} \lattice{X}_\alpha$ and $\mathbf{0}$ and $\mathbf{1}$ the bottom and top elements of the product lattice.

In a poset $\poset{P}$, we say $y$ \define{covers} $x$, denoted $x \covers y$, if $x \prec y$ and there does not exist $z \in \poset{P}$ such that $x \prec z \prec y$. A lattice (or poset) $\lattice{X}$ is \define{graded} if there exist a ranking $r: \lattice{X} \to \nats$ such that $r(x) < r(y)$ whenever $x \prec y$ and $r(y) = r(x)+1$ whenever $x \covers y$. For example, given a vector space $V$, let $\cat{Gr}(V)$ be the (Grassmanian) poset of subspaces of $V$ with the order $U \preceq U'$ if $U$ is a subspace of $U'$. One may check that this is a lattice with $U \join W = U + W$ the subspace sum, and $U \meet W = U \cap W$ the intersection. For $V$ finite dimensional, $\cat{Gr}(V)$ is graded via dimension: $r(U) = \dim(U)$. For another example, let $\powerset{S}$ be the powerset lattice of a set, $S$, ordered by inclusion. For $S$ finite, this lattice is graded by cardinality: $r(U \subseteq S) = \#U$.

A subset $\lattice{I}\subseteq \poset{P}$ is a \define{chain} if $\lattice{I}$ is totally-ordered. A finite chain $\lattice{I} = \setof{x_0 \prec x_1 \prec \cdots \prec x_\ell}$ is said to have \define{length} $\ell$. A poset $\poset{P}$ satisfies the \define{descending chain condition} (DCC) if no infinite strictly descending chain exists. 
\begin{lemma}
\label{thm:graded-dcc}
Any graded poset satisfies the descending chain condition.
\end{lemma}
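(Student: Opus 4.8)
The plan is to argue by contradiction, transporting a hypothetical infinite strictly descending chain in the graded poset down to an impossible infinite strictly descending sequence of natural numbers via the rank function. The essential observation is that a ranking $r$ into $\nats$ turns the order structure of $\poset{P}$ into numerical data, and $\nats$ is well-founded.

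First I would suppose, toward a contradiction, that $\poset{P}$ is graded with ranking $r : \poset{P} \to \nats$ yet fails the DCC, so that there exists an infinite strictly descending chain $x_0 \succ x_1 \succ x_2 \succ \cdots$ in $\poset{P}$. Next, for each index $i$, since $x_{i+1} \prec x_i$, the defining property of a graded poset---specifically the first clause, $r(x) < r(y)$ whenever $x \prec y$---yields $r(x_{i+1}) < r(x_i)$. Hence the images form an infinite strictly descending sequence $r(x_0) > r(x_1) > r(x_2) > \cdots$ of natural numbers.

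Finally, I would invoke the well-ordering of $\nats$: every nonempty subset of $\nats$ has a least element, so no infinite strictly descending sequence in $\nats$ can exist. This contradicts the sequence just constructed, and the contradiction establishes the lemma.

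I do not expect any genuine obstacle here; the argument uses only the order-preserving half of the grading hypothesis, and the covering clause $r(y) = r(x)+1$ plays no role. The single point worth stating carefully is that \emph{strict} descent is preserved---not merely the non-strict relation---which is precisely what the strict inequality $r(x) < r(y)$ for $x \prec y$ guarantees; without it one could not rule out a constant rank along the chain, and the appeal to well-foundedness of $\nats$ would fail.
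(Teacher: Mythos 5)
Your proof is correct and is essentially the paper's own argument: both transport a strictly descending chain through the ranking $r$ to a strictly decreasing sequence in $\nats$ and conclude by well-foundedness, with yours merely phrased as a contradiction and spelling out the well-ordering of $\nats$ explicitly. Your observation that only the clause $r(x) < r(y)$ for $x \prec y$ is needed (not the covering clause) is accurate and matches the paper's implicit use of the hypothesis.
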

\begin{proof}
The grading of any strictly descending chain is a strictly decreasing sequence of natural numbers and thus finite. 
\end{proof}

The \define{height} of a poset, denoted $\height(\poset{P})$, is the length of the maximal chain if it exists; otherwise, $\height(\poset{P}) = \infty$. The \define{distance} $d_\poset{P}(x,y)$ between $x, y \in \poset{P}$ is defined as the interval height $\height\left( \left[x, y \right] \right)$. Height is additive under (finite) products.
\begin{lemma}
\label{thm:height-product}
For $\setof{\poset{P}_i}_{i=1}^{N}$ be a finite family of posets,
\begin{equation}
\height \left( \prod_{i=1}^N \poset{P}_i \right) = \sum_{i=1}^N \height(\poset{P}_i)
\end{equation}
\end{lemma}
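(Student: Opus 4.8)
The plan is to reduce to the binary case by induction on $N$ and then establish the identity $\height(\poset{P} \times \poset{Q}) = \height(\poset{P}) + \height(\poset{Q})$ by proving two opposite inequalities. Throughout, recall that the order on a product is coordinatewise, so $(a,b) \prec (a',b')$ forces $a \preceq a'$ and $b \preceq b'$ with at least one of these strict. I would dispatch the infinite case first: if some $\height(\poset{P}_i) = \infty$, then fixing all other coordinates embeds an arbitrarily long chain of $\poset{P}_i$ into the product, so both sides equal $\infty$; hence we may assume all heights finite.

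For the lower bound $\geq$, since each $\height(\poset{P}) = p$ is finite it is attained by a chain $x_0 \prec \cdots \prec x_p$, and likewise $y_0 \prec \cdots \prec y_q$ realizes $\height(\poset{Q}) = q$. Concatenating a \emph{staircase} that first ascends in the first coordinate and then in the second,
\[
(x_0,y_0)\prec \cdots \prec (x_p,y_0)\prec (x_p,y_1)\prec \cdots \prec (x_p,y_q),
\]
produces a chain of length $p+q$, so $\height(\poset{P}\times \poset{Q})\geq \height(\poset{P})+\height(\poset{Q})$.

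The upper bound $\leq$ is the crux. Given any chain $(a_0,b_0)\prec\cdots\prec(a_m,b_m)$ in the product, the two projections yield weakly increasing sequences in each factor. Let $I$ (resp.\ $J$) index those steps $k \mapsto k+1$ at which the first (resp.\ second) coordinate strictly increases. Because each product step is strict in at least one coordinate, $\setof{0,\dots,m-1} = I\cup J$, whence $m\leq \#I + \#J$. Collapsing repeated values, the distinct entries among $a_0\preceq\cdots\preceq a_m$ form a chain in $\poset{P}$ with exactly $\#I$ strict jumps, i.e.\ a chain of length $\#I$, so $\#I\leq \height(\poset{P})$; symmetrically $\#J\leq\height(\poset{Q})$. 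Thus $m\leq \height(\poset{P})+\height(\poset{Q})$, and taking the supremum over chains gives the inequality.

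The only subtlety I anticipate --- the main obstacle --- is the bookkeeping in the upper bound: one must make precise that a single strict step in the product need not be strict in \emph{both} coordinates, so steps are counted by the union $I \cup J$ rather than by naively adding coordinate chain-lengths, and that collapsing repeated values converts the number of strict jumps $\#I$ into a genuine chain length bounded by $\height(\poset{P})$. With the binary case in hand, the general statement follows by a routine induction using $\prod_{i=1}^{N}\poset{P}_i \cong \big(\prod_{i=1}^{N-1}\poset{P}_i\big)\times \poset{P}_N$ together with the additivity just proved.
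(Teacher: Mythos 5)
Your proof is correct and takes essentially the same route as the paper's: induction reduces to the binary case, and the identity $\height(\poset{P}\times\poset{Q})=\height(\poset{P})+\height(\poset{Q})$ is obtained by comparing chains in the product with their coordinate projections. In fact, your two-inequality write-up (staircase for $\geq$, strict-step counting for $\leq$, plus the infinite case) carefully fills in what the paper's one-line proof merely asserts, namely that a maximal chain projects to maximal chains whose lengths sum to its own.
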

\begin{proof}
It suffices by finite induction to show $\height(\poset{P} \times \poset{P}') = \height(\poset{P}) + \height(\poset{P}')$. A maximal chain of $\poset{P} \times \poset{P}'$ projects to maximal chains $\lattice{I}$ and $\lattice{I}'$ in $\poset{P}$ and $\poset{P}'$ respectively and lies in the product $\lattice{I} \times \lattice{I}'$. By projection and maximality, its length is $\ell(\lattice{I}) + \ell(\lattice{I}')$.
\end{proof}

% ---------------------------------------------------------------------------------------------------
\subsection{The Tarski Fixed Point Theorem}
\label{sec:TFPT}
% ---------------------------------------------------------------------------------------------------

We are especially concerned with maps of lattices $f: \lattice{X} \rightarrow \lattice{Y}$. A (poset or) lattice map $f$ is \define{order-preserving} if $x \preceq x'$ implies $f(x) \preceq f(x')$. A lattice map $f$ is \define{join-preserving} if $f(x \join x') = f(x) \join f(x')$ and $f(0) = 0$. Similarly, $f$ is \define{meet-preserving} if $f(x \meet x') = f(x) \meet f(x')$ and $f(1) = 1$. If $f: \lattice{X} \rightarrow \lattice{Y}$ is join preserving, then it is automatically order-preserving since, if $x \preceq x'$, then $x \join x' = x'$ and
\[
  f(x) \join f(x') = f( x \join x') = f(x')
\]
which holds if and only if $f(x) \preceq f(x')$. The same holds for meet-preserving maps.

An order-preserving map $\Phi: \lattice{X} \rightarrow \lattice{X}$ is \define{expanding} if $\Phi(x) \succeq x$ and \define{contracting} if $\Phi(x) \preceq x$. The \define{fixed point set} of $\Phi: \lattice{X} \rightarrow \lattice{X}$ is the set $\fixedpoint(\Phi) = \setof{x \in \lattice{X}~:~\Phi(x) = x}$. The fixed point set can be realized as the intersection of the \define{prefix points} of $\Phi$, $\mathrm{Pre}(\Phi) = \setof{x \in X:~\Phi(x) \preceq x}$ and the \define{suffix points} of $\Phi$, $\mathrm{Post}(\Phi) = \setof{x \in X:~\Phi(x) \succeq x}$. 

The critical result about the fixed point set of a lattice endomorphism concerns its subobject structure within the lattice.
%The classical \define{Tarski Fixed Point Theorem} is well-known \cite{roman2008lattices}, but, for completeness and for use of perspectives in the sequel, we give a brief proof. 

\begin{theorem}[Tarski Fixed Point Theorem]
\label{thm:tarski}
The fixed point set of an order-preserving endomorphism of a complete lattice is a complete lattice.
\end{theorem}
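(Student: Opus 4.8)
The plan is to reduce the claim to two sub-tasks: (i) establishing that \emph{every} order-preserving endomorphism of a complete lattice has a least fixed point, and (ii) showing that every subset $S \subseteq \fixedpoint(\Phi)$ admits a join inside $\fixedpoint(\Phi)$ --- with (i) serving as the lemma that powers (ii). Meets and a greatest element then follow by the order-dual argument, and together these facts are exactly what it means for $\fixedpoint(\Phi)$ to be a complete lattice, the empty join and empty meet supplying its bottom and top. The conceptual point to keep in mind throughout is that joins and meets in $\fixedpoint(\Phi)$ need not coincide with those computed in $\lattice{X}$, so each one must be constructed by hand rather than inherited.

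For the least fixed point I would work with the prefix points $\mathrm{Pre}(\Phi) = \setof{x \in \lattice{X} : \Phi(x) \preceq x}$, which is nonempty because $\Phi(1) \preceq 1$. Set $a = \bigwedge \mathrm{Pre}(\Phi)$, which exists by completeness of $\lattice{X}$. The key is a two-step squeeze: for each $x \in \mathrm{Pre}(\Phi)$ we have $a \preceq x$, hence $\Phi(a) \preceq \Phi(x) \preceq x$ by order-preservation; as this holds for all such $x$, the element $\Phi(a)$ is a lower bound of $\mathrm{Pre}(\Phi)$, so $\Phi(a) \preceq a$ and thus $a \in \mathrm{Pre}(\Phi)$. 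Applying $\Phi$ once more gives $\Phi(\Phi(a)) \preceq \Phi(a)$, so $\Phi(a) \in \mathrm{Pre}(\Phi)$ and therefore $a \preceq \Phi(a)$; the two inequalities force $\Phi(a) = a$. Since every fixed point trivially lies in $\mathrm{Pre}(\Phi)$ and $a$ is below all of $\mathrm{Pre}(\Phi)$, this $a$ is the least fixed point.

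The main step is to produce joins. Given $S \subseteq \fixedpoint(\Phi)$, let $p = \bigvee S$ be the join taken in $\lattice{X}$ and restrict attention to the principal upset $\upset{p} = \setof{x \in \lattice{X} : x \succeq p}$, which is again a complete lattice. The one nontrivial verification is that $\Phi$ carries $\upset{p}$ into itself: for $x \succeq p$ and any $s \in S$ we have $s = \Phi(s) \preceq \Phi(x)$ (using $s \preceq p \preceq x$), so $\Phi(x)$ bounds $S$ above and hence $\Phi(x) \succeq p$. Thus $\Phi$ restricts to an order-preserving endomorphism of $\upset{p}$, and applying the least-fixed-point construction \emph{inside} $\upset{p}$ yields an element $w$ that is the least fixed point of $\Phi$ lying above $p$. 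This $w$ is an upper bound of $S$ within $\fixedpoint(\Phi)$, and any fixed point bounding $S$ above must exceed $p$, hence lies in $\upset{p}$ and so, by minimality, exceeds $w$; therefore $w$ is the join of $S$ in $\fixedpoint(\Phi)$. Taking $S = \emptyset$ recovers the least fixed point as the bottom element.

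I expect the main obstacle to be conceptual rather than computational: resisting the urge to compute the join directly in $\lattice{X}$, where $\Phi(\bigvee S)$ generally fails to equal $\bigvee S$. The device that resolves this is the passage to $\upset{p}$ (dually, to the principal downset), which simultaneously confines $\Phi$ and lets the least-fixed-point lemma be reused as a black box; getting the order directions right and verifying the invariance $\Phi(\upset{p}) \subseteq \upset{p}$ is the one place demanding genuine care.
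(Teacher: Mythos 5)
Your proof is correct. The paper states this theorem without proof --- it is Tarski's classical result, quoted as background --- so there is no in-paper argument to compare against; your two-step scheme (least fixed point as the meet of the prefix points $\mathrm{Pre}(\Phi)$, then joins in $\fixedpoint(\Phi)$ obtained by restricting $\Phi$ to the invariant principal upset $\upset{p}$ and reusing the least-fixed-point lemma, with meets supplied by order duality) is exactly the standard Knaster--Tarski argument, including the two genuinely necessary checks: the invariance $\Phi(\upset{p}) \subseteq \upset{p}$ and the caveat that joins in $\fixedpoint(\Phi)$ need not coincide with joins in $\lattice{X}$.
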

%
%\begin{proof} 
%Let $\Phi:\lattice{X}\to\lattice{X}$ be order-preserving for $\lattice{X}$ complete. For $S\subset\mathrm{Pre}(\Phi)$ nonempty and $x\in S$,
%\[
%  \Phi\left( \bigmeet S \right) \preceq \Phi(x) \preceq x .
%\] 
%As this holds for every $x\in S$, $\bigmeet S \in \mathrm{Pre}(\Phi)$. Since 
%\[
%  \bigjoin S = \bigmeet\left( \bigcap_{x \in S} \upset{x} \right) ,
%\]
%it follows that $\mathrm{Pre}(\Phi)$ is complete. By duality, $\mathrm{Post}(\Phi)$ is complete also. The fixed point set of $\Phi$ can be expressed as 
%\[
%  \fixedpoint(\Phi) 
%  = 
%  \mathrm{Post}
%  \left(
%    \restriction{\Phi}{\mathrm{Pre}(\Phi)}
%  \right) .
%\]
%Hence, $\fixedpoint(\Phi)$ is complete.
%\end{proof}

The computational complexity of finding a fixed point via queries to $\Phi$ is well-understood 
\cite{chang2008complexity}.

% ---------------------------------------------------------------------------------------------------
\subsection{Galois Connections}
\label{sec:connections}
% ---------------------------------------------------------------------------------------------------

There is a type of lattice map which interfaces well with categorical methods and homological algebra. A \define{(Galois) connection} between a pair of lattices $(\lattice{X}, \lattice{Y})$ is an order-preserving pair, $f = (f_{\sbt}, f^{\sbt})$,
\begin{equation}
    \lattice{X} \galois{f_{\sbt}}{f^{\sbt}} \lattice{Y} 
    \quad {\rm such~that} \quad
    f_{\sbt}(x) \preceq y \Leftrightarrow x \preceq f^{\sbt}(y) \quad {\rm for~all} \quad x \in \lattice{X},~y \in \lattice{Y}.
\end{equation}
One calls $f_{\sbt}$ the \textit{lower} or \textit{left adjoint}, and $f^{\sbt}$ the \textit{upper} or \textit{right adjoint}. One interpretation of a connection is a best approximation to an order inverse, as the following proposition indicates.
\begin{proposition}
\label{thm:connections}
The following are equivalent:
\begin{enumerate}
    \item $f_{\sbt}(x) \preceq y \Leftrightarrow x \preceq f^{\sbt}(y)$ for all $x \in \lattice{X}, y \in \lattice{Y}$;
    \item $f^{\sbt} f_{\sbt} \succeq \id_X$ and $f_{\sbt} f^{\sbt} \preceq \id_Y$.
\end{enumerate}
\end{proposition}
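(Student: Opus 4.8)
The plan is to prove the two implications separately, exploiting the symmetry between the two adjoints. The forward direction (1) $\Rightarrow$ (2) is the purely formal part: to obtain the unit inequality $f^{\sbt} f_{\sbt} \succeq \id_X$, I would instantiate the biconditional (1) at $y = f_{\sbt}(x)$, so that its left-hand side $f_{\sbt}(x) \preceq f_{\sbt}(x)$ holds by reflexivity and the equivalence then forces the right-hand side $x \preceq f^{\sbt}(f_{\sbt}(x))$. Dually, instantiating (1) at $x = f^{\sbt}(y)$ turns the trivially true $f^{\sbt}(y) \preceq f^{\sbt}(y)$ into the counit inequality $f_{\sbt}(f^{\sbt}(y)) \preceq y$, i.e.\ $f_{\sbt} f^{\sbt} \preceq \id_Y$. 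Notably, this direction uses only reflexivity and not even the standing assumption that the two maps are order-preserving.

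For the converse (2) $\Rightarrow$ (1), I would combine transitivity with the order-preservation of the two maps. Assume $f_{\sbt}(x) \preceq y$; applying the order-preserving map $f^{\sbt}$ gives $f^{\sbt} f_{\sbt}(x) \preceq f^{\sbt}(y)$, and composing with the unit inequality $x \preceq f^{\sbt} f_{\sbt}(x)$ yields $x \preceq f^{\sbt}(y)$ by transitivity. The reverse implication is its order-dual: from $x \preceq f^{\sbt}(y)$, apply the order-preserving $f_{\sbt}$ to obtain $f_{\sbt}(x) \preceq f_{\sbt} f^{\sbt}(y)$, then compose with the counit inequality $f_{\sbt} f^{\sbt}(y) \preceq y$ to conclude $f_{\sbt}(x) \preceq y$.

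There is no serious obstacle here; the only point worth flagging is that, unlike the forward direction, the converse genuinely requires order-preservation (so that $f^{\sbt}$ and $f_{\sbt}$ respect the inequalities we feed them), and this is exactly what the definition of a connection in the excerpt already builds in. The argument is otherwise a mechanical interleaving of reflexivity, transitivity, and the two inequalities, with each half obtained from the other by exchanging the roles of $f_{\sbt}$ and $f^{\sbt}$.
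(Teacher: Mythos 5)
Your proposal is correct and follows essentially the same argument as the paper: the forward direction instantiates the biconditional at $y = f_{\sbt}(x)$ and $x = f^{\sbt}(y)$ using reflexivity, and the converse combines order-preservation of $f_{\sbt}$, $f^{\sbt}$ with the unit/counit inequalities via transitivity. Your added remark that the forward direction needs only reflexivity is a nice observation but does not change the route.
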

\begin{proof}
Suppose $f_{\sbt} (x) \preceq y$ if and only if $x \preceq f^{\sbt} (y)$ for all $x \in \lattice{X}$, $y \in \lattice{Y}$.
In particular, by reflexivity, $f_{\sbt} (x) \preceq f_{\sbt}(x)$.
Hence, $f^{\sbt} f_{\sbt} (x) \succeq x$ for all $x \in \lattice{X}$.
Similarly, by reflexivity, $f^{\sbt} (y) \preceq f^{\sbt} (y)$.
Hence, $f_{\sbt} f^{\sbt} (y) \preceq y$ for all $y \in \lattice{Y}$. 
\par
Conversely, suppose $f^{\sbt} f_{\sbt}(x) \succeq x$ and $f_{\sbt} f^{\sbt}(y) \preceq y$ for all $x \in \lattice{X}$, $y \in \lattice{Y}$.
If $f_{\sbt} (x) \preceq y$, then, since $f^{\sbt}$ is order-preserving, $f^{\sbt} f_{\sbt} (x) \preceq f^{\sbt} (y)$.
But, $f^{\sbt} f_{\sbt}(x) \succeq x$ which implies $x \preceq f^{\sbt} (y)$ by transitivity.
Similarly, if $x \preceq f^{\sbt} (y)$, then, since $f_{\sbt}$ is order-preserving, $f_{\sbt} (x) \preceq f_{\sbt} f^{\sbt} (y)$ which implies $f_{\sbt} (x) \preceq y$ since $f_{\sbt} f^{\sbt}(y) \preceq y$.
\end{proof}

A category-theoretic interpretation is in order. Viewing a lattice $\lattice{X}$ as a category with the underlying poset structure defining morphisms, the meet and join operations are the product and coproduct respectively, $0$ is the initial object, and $1$ is the terminal object. Functors between lattices are precisely order-preserving maps and a connection between lattices is precisely an adjunction. This leads to the following well-known useful characterization of connections.

\begin{proposition}
\label{thm:aft}
If $f = (f_{\sbt}, f^{\sbt})$, then $f_{\sbt}$ preserves joins and $f^{\sbt}$ preserves meets. Conversely, if $f_{\sbt}: \lattice{X} \rightarrow \lattice{Y}$ preserves joins, then there exist a map $f^{\sbt}: \lattice{Y} \rightarrow \lattice{X}$ that preserves meets such that $(f_{\sbt}, f^{\sbt})$ is a connection. Dually, if $f^{\sbt}: \lattice{Y} \rightarrow \lattice{X}$ preserves meets, then there exist a map $f_{\sbt}: \lattice{X} \rightarrow \lattice{Y}$ that preserves joins such that $(f_{\sbt}, f^{\sbt})$ is a connection.
Explicitly, these are given as
\begin{equation}
\label{eq:adjoint-formula}
    f^{\sbt}(y) = \bigvee f_{\sbt}^{-1}(\downset{y})
  \quad
  {\rm and }
  \quad
  f_{\sbt}(x) = \bigwedge f^{\sbt -1} (\upset{x}) .
\end{equation}
\end{proposition}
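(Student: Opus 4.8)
The plan is to establish the proposition in three movements matching its three assertions. Throughout I assume the lattices are complete, so that the joins and meets appearing in \eqref{eq:adjoint-formula} are guaranteed to exist.

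First I would prove the forward direction: a connection $f = (f_{\sbt}, f^{\sbt})$ has $f_{\sbt}$ join-preserving and $f^{\sbt}$ meet-preserving. The key tool is that antisymmetry makes an element of a lattice uniquely determined by its principal upset: if $a \preceq c \Leftrightarrow b \preceq c$ for every $c$, then taking $c = a$ and $c = b$ forces $a = b$. So to show $f_{\sbt}(\bigvee S) = \bigvee f_{\sbt}(S)$ for any $S \subseteq \lattice{X}$, I would check that both sides have the same upper bounds: for arbitrary $y \in \lattice{Y}$, the adjunction gives $f_{\sbt}(\bigvee S) \preceq y \Leftrightarrow \bigvee S \preceq f^{\sbt}(y) \Leftrightarrow s \preceq f^{\sbt}(y)$ for all $s \in S \Leftrightarrow f_{\sbt}(s) \preceq y$ for all $s \in S \Leftrightarrow \bigvee f_{\sbt}(S) \preceq y$. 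Taking $S = \emptyset$ recovers $f_{\sbt}(0) = 0$. The dual chase, comparing lower bounds (principal downsets), shows $f^{\sbt}$ preserves meets.

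Next I would prove the converse. Given $f_{\sbt}$ join-preserving, define $f^{\sbt}$ by the first formula in \eqref{eq:adjoint-formula}, $f^{\sbt}(y) = \bigvee f_{\sbt}^{-1}(\downset{y}) = \bigvee \setof{x : f_{\sbt}(x) \preceq y}$. Order-preservation of $f^{\sbt}$ is immediate, since $y \preceq y'$ gives $\downset{y} \subseteq \downset{y'}$ and hence a containment of preimages. The heart of the argument is verifying the adjunction biconditional $f_{\sbt}(x) \preceq y \Leftrightarrow x \preceq f^{\sbt}(y)$. The forward implication needs only the definition: if $f_{\sbt}(x) \preceq y$ then $x$ lies in $f_{\sbt}^{-1}(\downset{y})$, hence below its join $f^{\sbt}(y)$. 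The reverse implication is where join-preservation is essential: if $x \preceq f^{\sbt}(y)$, then applying the order- and join-preserving map $f_{\sbt}$ yields $f_{\sbt}(x) \preceq f_{\sbt}(\bigvee f_{\sbt}^{-1}(\downset{y})) = \bigvee \setof{f_{\sbt}(x') : f_{\sbt}(x') \preceq y}$, and every element of this join is $\preceq y$, so $f_{\sbt}(x) \preceq y$. This exhibits $(f_{\sbt}, f^{\sbt})$ as a connection, and then the already-proved forward direction supplies that $f^{\sbt}$ preserves meets for free.

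Finally, the dual statement --- given $f^{\sbt}$ meet-preserving, the second formula $f_{\sbt}(x) = \bigwedge f^{\sbt -1}(\upset{x})$ produces a join-preserving left adjoint --- follows by the order-reversing dual of the previous paragraph, interchanging the roles of meets and joins, upsets and downsets, and the two adjoints. The main obstacle I anticipate is one of bookkeeping rather than depth: one must track that the joins and meets in \eqref{eq:adjoint-formula} are taken over the correct preimage sets, rely on completeness to guarantee their existence, and resist proving order-preservation of the constructed adjoint by a circular appeal to the very connection it is meant to define. Everything else reduces to the upper-bound/lower-bound determination trick and a single decisive use of join- (resp.\ meet-) preservation in the nontrivial direction of the biconditional.
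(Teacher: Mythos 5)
Your proposal is correct, and on the nontrivial (converse) direction it is essentially the paper's own argument: both define $f^{\sbt}(y) = \bigvee f_{\sbt}^{-1}(\downset{y})$ and both hinge on the same decisive computation, pushing join-preservation through the preimage of a downset to get $f_{\sbt}\bigl(\bigvee f_{\sbt}^{-1}(\downset{y})\bigr) = \bigvee f_{\sbt} f_{\sbt}^{-1}(\downset{y}) \preceq y$. The packaging differs slightly: you verify the adjunction biconditional $f_{\sbt}(x) \preceq y \Leftrightarrow x \preceq f^{\sbt}(y)$ directly, whereas the paper routes through Proposition \ref{thm:connections} and checks the unit/counit inequalities $f^{\sbt} f_{\sbt} \succeq \id$ and $f_{\sbt} f^{\sbt} \preceq \id$; these amount to the same inequalities (your forward implication specialized at $y = f_{\sbt}(x)$ is the paper's unit inequality). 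The genuine divergence is in the first assertion: the paper dispatches it by citing the (co)limit-preserving properties of adjunctions, while you prove it elementarily via the determination-by-upper-bounds trick, which makes the argument self-contained at the cost of a little length. Two points where your write-up is actually more careful than the paper's: you flag completeness as the hypothesis guaranteeing the joins in \eqref{eq:adjoint-formula} exist (the paper's stated definition of join-preserving involves only finite joins, yet its proof silently preserves an arbitrary join --- without completeness the converse can fail), and you explicitly check order-preservation of the constructed $f^{\sbt}$ without circularity, a step the paper omits.
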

\begin{proof}
The first statement follows from the (co)limit-preserving properties of adjuctions. The second statement is a consequence of the Adjoint Functor Theorem. For completeness, we give a constructive proof in one direction (the other following from duality). Given $f_{\sbt}$, it suffices to show by Proposition \ref{thm:connections} that $f^{\sbt} f_{\sbt} (x) \succeq x$ and $f_{\sbt} f^{\sbt}(y) \preceq y$.
We have
\[
  f^{\sbt} f_{\sbt} (x) = \bigvee f_{\sbt}^{-1}\left(\downarrow f_{\sbt}(x) \right).
\]
But $\setof{x} \subseteq f_{\sbt}^{-1} f_{\sbt} (x) \subseteq  f_{\sbt}^{-1}\left(\downarrow f_{\sbt}(x) \right)$ implies
\[
  f^{\sbt} f_{\sbt} (x) = \bigvee f_{\sbt}^{-1}\left(\downarrow f_{\sbt}(x) \right) \succeq \bigvee f_{\sbt}^{-1} f_{\sbt} (x)  \succeq \bigvee \setof{x} = x.
\]
For the other inequality, we have
\[
  f_{\sbt} f^{\sbt} (y) 
  = f_{\sbt} \left( \bigvee f_{\sbt}^{-1}(\downarrow y) \right) \\
  = \bigvee f_{\sbt} f_{\sbt}^{-1} (\downarrow y)
\]
since, by the first part, $f_{\sbt}$ preserves joins. Notice, $ f_{\sbt} f_{\sbt}^{-1} (\downarrow y) \subseteq~\downarrow y$. Hence,
\[
  f_{\sbt} f^{\sbt} (y) = \bigvee f_{\sbt} f_{\sbt}^{-1} (\downarrow y) \preceq \bigvee \downarrow y = y.
\]
\end{proof}
Equation~\ref{eq:adjoint-formula} is important for computational purposes as it gives an explicit formula for computing the upper adjoint of a join-preserving map.
The complexity of computing adjoints is a question of interest.

% ---------------------------------------------------------------------------------------------------
\subsection{Categories of Lattices}
\label{sec:cats}
% ---------------------------------------------------------------------------------------------------

Grandis \cite{grandis2013homological} gives several definitions of categories of lattices that we will find useful.
\begin{definition}
Let $\cat{Ltc}$ be the \define{category of lattices and connections}.
The objects, $\cat{ob}\left( \cat{Ltc}\right)$, are lattices.
Morphisms are connections,
\[
\lattice{X} \galois{f_{\sbt}}{f^{\sbt}} \lattice{Y}
\]
denoted as a pair, $f = (f_{\sbt}, f^{\sbt})$.
The identity morphism is the connection, $\lattice{X} \galois{\id}{\id} \lattice{X}$.
Composition of arrows
\[
\lattice{X} \galois{f_{\sbt}}{f^{\sbt}} \lattice{Y}  \galois{g_{\sbt}}{g^{\sbt}} \lattice{Z}
\]
is given by $g \circ f = \left( g_{\sbt} \circ f_{\sbt}, f^{\sbt} \circ g^{\sbt} \right)$.
\end{definition}

Often, we desire to work with maps that are not bi-directional in order to form limits and colimits. Consider the category, $\cat{Sup}$, of \define{lattices and join-preserving maps}. Likewise, consider the category, $\cat{Inf}$, of \define{lattices and meet-preserving maps}. We can view these categories as the images of forgetful functors,
\begin{equation}
\label{eq:forgetful}
  U:~ \cat{Ltc} \rightarrow \cat{Sup}
  \quad 
  :
  \quad
  V:~ \cat{Ltc}^\mathrm{op} \rightarrow \cat{Inf}
\end{equation}
given by the identity on objects and $U: (f_{\sbt}, f^{\sbt}) \mapsto f_{\sbt}$ and $V: (f_{\sbt}, f^{\sbt}) \mapsto f^{\sbt}$ on morphisms. For complete lattices, there are corresponding full subcategories, $\cat{cLtc}$, $\cat{cSup}$, and $\cat{cInf}$.

There are two other important subcategories of $\cat{Ltc}$: the \define{distributive} and the \define{modular} lattices,
\[
  \cat{Dlc} \subset \cat{Mlc} \subset \cat{Ltc} .
\]
A lattice, $\lattice{X}$, is \define{modular} if for $x, y \in \lattice{X}$,
\[
  x \preceq y ~\Rightarrow~ x \join ( z \meet y) = (x \join z) \meet y  
\]
for all $z \in \lattice{X}$, whereas it is \define{distributive} if for $x, y, z  \in \lattice{X}$,
\[
  x \meet (y \join z) = (x \meet y) \join (x \meet z)
\]
or dually,
\[
  x \join (y \meet z) = (x \join y) \meet (x \join z) .
\]
A distributive lattice satisfies the modular identity. A connection $f = (f_{\sbt}, f^{\sbt})$ is \define{left exact} if $f^{\sbt} f_{\sbt} (x) = x \join f^{\sbt}(0)$ and is \define{right exact} if $f_{\sbt} f^{\sbt}(y) = y \meet f_{\sbt}(1)$. An \define{exact} connection is one which is both left and right exact. The category $\cat{Mlc}$ consists of modular lattices with exact connections, while $\cat{Dlc}$ is given by distributive lattices and exact connections. Note that the subcategories $\mathsf{Mlc}$ and $\mathsf{Dlc}$ are not full subcategories of $\mathsf{Ltc}$, but $\mathsf{Dlc}$ {\em is} a full subcategory of $\mathsf{Mlc}$. 

% ---------------------------------------------------------------------------------------------------
\subsection{Cellular Sheaf Theory}
\label{sec:cellular}
% ---------------------------------------------------------------------------------------------------

Our goal is to set up and work with cellular sheaves of lattices. A sheaf is a type of data structure, built for the aggregation of local data and constraints into global solutions. The subject of sheaf theory is rich and technically intricate \cite{iversen2012cohomology, kashiwara2013sheaves,maclane2012sheaves}, but in recent years, a discrete version adapted to posets from cell complexes has been shown to be useful in a number of applications \cite{curry2014sheaves}. We therefore present a simple overview of the cellular theory.

It is assumed that the reader is familiar with the definition of a \define{cell complex}: these are slightly more general than simplicial complexes, and not quite as general as CW-complexes \cite{whitehead1949combinatorial}. A cell complex $\Space{X}$ is filtered by its $k$-skeleta $\Space{X}^{(k)}$, for $k\in\nats$, where $\Space{X}^{(0)}$ is the vertex set. We write $\Space{X}_k$ for the set of all $k$-cells in $\Space{X}$, where we identify each cell $\sigma$ with its image in $\Space{X}^{(k)}$.

Let $\cat{Fc}(\Space{X})$ denote the \define{face poset} of a cell complex $\Space{X}$ given by the transitive-reflexive closure of the relation: $\sigma \fc \tau$ if $\sigma$ is a face of $\tau$. Every cell $\sigma$ of $\Space{X}$ has:
\begin{enumerate}
% rho < sigma < tau
\item \define{boundary}, $\boundary \sigma = \setof{\rho: \rho \fc \sigma}$; and
\item \define{coboundary}, $\coboundary \sigma = \setof{\tau: \tau \fcop \sigma}$.
\end{enumerate}
It is helpful to regard the face poset as a category for what follows.

Cellular sheaves attach data to cells, glued together according to the face poset. A \define{cellular sheaf} taking values in a complete category $\cat{D}$ is simply a functor  $\mathcal{F}: \cat{Fc}(\Space{X}) \rightarrow \cat{D}$.  Explicitly, $\mathcal{F}$ attaches to each cell $\sigma$ of $\Space{X}$ an object, $\mathcal{F}_\sigma$, called the \define{stalk} over $\sigma$. For pairs $\sigma \fc \tau$, $\mathcal{F}$ prescribes \define{restriction maps}, $\resmap{F}{\sigma}{\tau}: \mathcal{F}_\sigma \rightarrow \mathcal{F}_\tau$, so that for $\rho \fc \sigma \fc \tau$,
\begin{enumerate}
\item $\resmap{F}{\sigma}{\sigma} = \id_{\mathcal{F}_{\sigma}}$
\item $\resmap{F}{\sigma}{\tau}\circ \resmap{F}{\rho}{\sigma} = \resmap{F}{\rho}{\tau}$
\end{enumerate}
The reader familiar with sheaves over topological spaces should think of a cellular sheaf as a discrete version, using the nerve of a locally-finite collection of open sets as the cell complex. 

Sheaves describe consistency or consensus relationships between data, programmed via the restriction maps: this perspective has generated applications in flow networks \cite{ghrist2013topological}, sensing \cite{ghrist2017positive}, opinion networks \cite{hansen2020opinion}, and distributed optimization \cite{hansen2019distributed}. The category of cellular sheaves over a cell complex $\Space{X}$ valued in $\cat{D}$, denoted $\cat{Sh}_\Space{X} \left( \cat{D} \right)$, has as objects sheaves, $\mathcal{F}$, and as morphisms, natural transformations $\eta:\mathcal{F}\to\mathcal{G}$. Inverse and direct images, (sometimes) tensor products, and other operations can be defined \cite{hansen2019toward, curry2014sheaves}.

In a given sheaf, the transition from local restrictions to global satisfaction is coordinated via the global section functor. The \define{(global) sections} of $\mathcal{F}$, denoted $\Gamma(\Space{X}; \mathcal{F})$, is the limit
\[
  \Gamma(\Space{X}; \mathcal{F}) = \limit \left(\mathcal{F}: \cat{Fc}(\Space{X}) \rightarrow \cat{D} \right) .
\]
There is an explicit description in terms of assignments of data to 0-cells that agree over 1-cells:
\begin{equation}
\label{eq:global-section}
%\limit \mathcal{F} = 
  \Gamma(\Space{X}; \mathcal{F}) 
  = 
  \setof{\mathbf{x} \in \prod_{v \in \Space{X}_0} \mathcal{F}_v
  ~:~
  \forall~ v \fc e \fcop w,~\resmap{F}{v}{e}(x_v) = \resmap{F}{w}{e}(x_{w})} .
\end{equation}

For cellular sheaves taking values in an abelian category, \define{sheaf cohomology} is straightforward. One forms the cochain complex $(C^\bullet, \delta^\bullet)$, where 
\[
  C^k(\Space{X}; \mathcal{F}) = \prod_{\dim\sigma=k} \mathcal{F}_\sigma ,
\]
are the $k$-cochains and the sheaf coboundary map $\delta:C^k(\Space{X}; \mathcal{F})\to C^{k+1}(\Space{X}; \mathcal{F})$ is given by
\begin{equation}
\label{eq:abelian-coboundary}
  (\delta \mathbf{x})_\tau 
  = 
  \sum_{\sigma \fc \tau} \left[ \sigma:\tau \right] \mathcal{F}_{ \sigma \fc \tau}(x_\sigma) ,
\end{equation}
where $\left[ \sigma:\tau \right] = \pm 1$ is an incidence number determined by a choice of orientation on cells of $\Space{X}$. The cohomology is then $H^k(\Space{X}; \mathcal{F})=\Ker\delta / \Im\delta$. In degree zero, this 
computes the global sections via a natural isomorphism. Cellular sheaf cohomology has proven useful in a number of settings of late \cite{coding2011ghrist,ghrist2013topological,ghrist2017positive, hansen2020opinion}, and it is our goal to extend such to the setting of sheaves of lattices.

% ---------------------------------------------------------------------------------------------------
\subsection{Cellular Hodge theory}
\label{sec:hodgecell}
% ---------------------------------------------------------------------------------------------------

The Hodge Theorem for Riemannian manifolds is a well-known example of the topological content of the Laplacian on differential forms: the kernel of the Laplacian gives the de Rham cohomology, which is isomorphic to the singular compactly supported cohomology with real coefficients \cite{warner2013foundations}. There are simple combinatorial versions of Hodge theory for cell complexes \cite{eckmann1944harmonische}, which in its most stripped-down cartoon form implies the widely-known fact that the kernel of the combinatorial graph Laplacian has dimension equal to the number of connected components of the graph \cite{chung1997spectral}.

There is a richer variant of combinatorial Hodge theory for cellular sheaves of inner-product spaces \cite{hansen2019toward}. Given a cell complex $\Space{X}$ and a cellular sheaf $\mathcal{F}$ on $\Space{X}$ taking values in the category of inner product spaces and linear transformations, one has a well-defined sheaf cohomology as per the previous subsection. The coboundary map $\delta$ from \eqref{eq:abelian-coboundary} has an adjoint $\delta^*$ given by taking the linear adjoint of each restriction map $\mathcal{F}_{ \sigma \fc \tau}$. The resulting \define{Hodge Laplacian} of $\mathcal{F}$ is given by
\begin{equation}
  \Laplacian = \delta^\ast \delta + \delta \delta^\ast.
\end{equation}
This specializes to the combinatorial graph Laplacian in the case of a constant sheaf over a cell complex. 

There is a very nice Hodge theory for sheaves of finite-dimensional real vector spaces, beginning with the observation that the kernel of $\Laplacian$ is isomorphic to the sheaf cohomology. The Hodge Laplacian is symmetric and positive semidefinite, endowing the cochain complex with a quadratic form $\langle \cdot, \Laplacian \cdot \rangle$ which, e.g., gives a measure of how close a cochain in $C^0$ is to being a global section. More is possible, including a generalization of spectral graph theory to the setting of sheaves \cite{hansen2019toward} via the spectral data of $\Laplacian$, as well as the application of diffusion dynamics via $\Laplacian$ to compute sheaf cohomology \cite{hansen2019toward}. It is the latter that we wish to generalize to sheaves of lattices.

% @@@@@@@@@@@@@@@@@@@@@@@@@@@@@@@@@@@@@@@@@@@@@@@@@@@@@@@@@@@@@@@@@@@@@@@@@@@@@@@@@@@@@@@@@@@@@@@@@@@@@@@@@@@@@@@@@
\section{The Tarski Laplacian}
\label{sec:Tarski}
% @@@@@@@@@@@@@@@@@@@@@@@@@@@@@@@@@@@@@@@@@@@@@@@@@@@@@@@@@@@@@@@@@@@@@@@@@@@@@@@@@@@@@@@@@@@@@@@@@@@@@@@@@@@@@@@@@

% ---------------------------------------------------------------------------------------------------
\subsection{Defintion and Properties}
\label{sec:defnprop}
% ---------------------------------------------------------------------------------------------------
Let $\mathcal{F}: \cat{Fc}(\Space{X}) \rightarrow \cat{Ltc}$ be a lattice-valued sheaf on a cell complex, $\Space{X}$. The restriction maps for cells $\sigma\fc\tau$ are therefore connections of the (notationally awkward) form
\[
     \resmap{F}{\sigma}{\tau} = (\resmaplower{F}{\sigma}{\tau}, \resmapupper{F}{\sigma}{\tau}). 
\]
The 0-cochains $C^0(\Space{X}; \mathcal{F})$ are choices of data on vertex stalks. There is a sensible definition of a Laplacian which mimics the Hodge Laplacian for sheaves of vector spaces. Given the role that the Tarski Fixed Point Theorem plays in what follows, it seems fitting to call this novel Laplacian by his name. 

\begin{definition}
The \define{Tarski Laplacian} for $\mathcal{F}: \cat{Fc}(\Space{X}) \rightarrow \cat{Ltc}$ is the lattice map
$\Laplacian: C^0(\Space{X}; \mathcal{F}) \rightarrow C^0(\Space{X}; \mathcal{F})$
given by
\begin{equation}
\label{eq:laplacian}
  (\Laplacian \mathbf{x})_v = \bigwedge_{e \in \coboundary v} \resmapupper{F}{v}{e} \left(\bigwedge_{w \in \partial e} \resmaplower{F}{w}{e}(x_w)   \right) 
\end{equation}
\end{definition}
This Laplacian, like the Hodge Laplacian of a cellular sheaf, defines a diffusion process in which information propagates via sheaf restriction maps.
\begin{lemma}
\label{lem:decomposition}
The Tarski Laplacian decomposes into two parts,
\begin{equation}
  (\Laplacian \mathbf{x})_v 
  = 
  \underbrace{
  \left(
    \bigwedge_{\substack{e \in \coboundary v \\ ~~~}} \resmapupper{F}{v}{e} \resmaplower{F}{v}{e}(x_v) 
  \right)
  }_{\mathrm{expanding}}  
  \meet 
  \underbrace{
  \left(
    \bigwedge_{\substack{e \in \coboundary v \\ w \in \partial e - \setof{v}}} \resmapupper{F}{v}{e} \resmaplower{F}{w}{e}(x_w)
  \right)
  }_{\mathrm{mixing}}.
\end{equation}
\end{lemma}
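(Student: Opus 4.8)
The plan is to unwind the nested meets in \eqref{eq:laplacian} using two structural facts. First, whenever $e \in \coboundary v$ we have $v \fc e$, so $v$ itself belongs to $\partial e$; this lets us single out the vertex $v$ inside the inner meet. Second, each upper adjoint $\resmapupper{F}{v}{e}$ preserves meets, by Proposition \ref{thm:aft}. These two observations, together with the commutativity and associativity of $\meet$, are all that is needed.

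Concretely, I would first split the inner meet over $\partial e$ as
\[
  \bigwedge_{w \in \partial e} \resmaplower{F}{w}{e}(x_w)
  =
  \resmaplower{F}{v}{e}(x_v) \meet \bigwedge_{w \in \partial e - \setof{v}} \resmaplower{F}{w}{e}(x_w),
\]
which is legitimate precisely because $v \in \partial e$. Applying $\resmapupper{F}{v}{e}$ and using that it preserves meets distributes the upper adjoint across both the separated $v$-term and the remaining indexed meet, producing a self-term $\resmapupper{F}{v}{e}\resmaplower{F}{v}{e}(x_v)$ together with cross-terms $\resmapupper{F}{v}{e}\resmaplower{F}{w}{e}(x_w)$ for $w \neq v$.

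Finally I would take the outer meet over $e \in \coboundary v$. Since $\meet$ is associative and commutative, the resulting doubly-indexed meet can be regrouped: all self-terms assemble into the expanding factor $\bigwedge_{e \in \coboundary v} \resmapupper{F}{v}{e}\resmaplower{F}{v}{e}(x_v)$, and all cross-terms assemble into the mixing factor $\bigwedge_{e \in \coboundary v,\, w \in \partial e - \setof{v}} \resmapupper{F}{v}{e}\resmaplower{F}{w}{e}(x_w)$, yielding the stated identity.

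The argument is purely formal, so there is no substantive obstacle; the only thing to watch is the bookkeeping that $v \in \partial e$ for every coface $e$ of $v$ — this is exactly what guarantees that the self-term is present for each $e$ and hence that the expanding factor ranges over the full coboundary — together with correctly tracking each use of meet-preservation.
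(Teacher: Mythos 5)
Your proof is correct and follows essentially the same route as the paper's: single out the $v$-term in the inner meet (valid since $v \in \partial e$ for every $e \in \coboundary v$), distribute $\resmapupper{F}{v}{e}$ across the meet using Proposition \ref{thm:aft}, and regroup by associativity and commutativity. The only ingredient the paper adds is the observation, via Proposition \ref{thm:connections}, that $\resmapupper{F}{v}{e}\resmaplower{F}{v}{e} \succeq \id_{\mathcal{F}_v}$, which justifies the label ``expanding'' on the first factor but is not needed for the identity itself.
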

\begin{proof}
The map $\resmapupper{F}{v}{e} \resmaplower{F}{v}{e} \succeq \id_{\mathcal{F}_v}$ is expanding by Proposition \ref{thm:connections} while $\resmapupper{F}{v}{e}$ preserves meets by Proposition \ref{thm:aft}.
\end{proof}
Information is propagated across the 1-skeleton of $\Space{X}$ as a combination of mixing with neighboring states and expanding the local state, taking the meet of all operations. Since our lattices do not have weights, mixing and expansion are given equal priority.

\begin{lemma}
\label{thm:tarski-order}
The Tarski Laplacian $\Laplacian$ is order-preserving on the product poset $C^0(\Space{X}; \mathcal{F})$.
\end{lemma}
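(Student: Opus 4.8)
The plan is to exhibit $\Laplacian$ as a composite of maps each of which is order-preserving, exploiting the fact that order-preservation is closed under composition and under the formation of meets. Fix cochains $\mathbf{x} \preceq \mathbf{y}$ in the product poset $C^0(\Space{X};\mathcal{F})$, meaning $x_w \preceq y_w$ for every vertex $w$; the goal is to deduce $(\Laplacian\mathbf{x})_v \preceq (\Laplacian\mathbf{y})_v$ for each $v$, which is precisely the product order on the image.

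I would unwind the definition \eqref{eq:laplacian} one layer at a time, propagating monotonicity from the innermost operation outward. For a fixed edge $e \in \coboundary v$ and vertex $w \in \partial e$, the lower adjoint $\resmaplower{F}{w}{e}$ preserves joins by Proposition \ref{thm:aft} and hence is order-preserving (as noted in \S\ref{sec:connections}), so $\resmaplower{F}{w}{e}(x_w) \preceq \resmaplower{F}{w}{e}(y_w)$. Taking the meet over $w \in \partial e$ preserves this inequality, since the meet is a greatest lower bound and is therefore monotone in each argument, giving $\bigwedge_{w \in \partial e} \resmaplower{F}{w}{e}(x_w) \preceq \bigwedge_{w \in \partial e} \resmaplower{F}{w}{e}(y_w)$.

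Next I would apply the upper adjoint $\resmapupper{F}{v}{e}$, which preserves meets by Proposition \ref{thm:aft} and is likewise order-preserving, so the inequality survives. A final meet over $e \in \coboundary v$, again monotone as a greatest lower bound, yields $(\Laplacian\mathbf{x})_v \preceq (\Laplacian\mathbf{y})_v$. Since $v$ was arbitrary, $\Laplacian\mathbf{x} \preceq \Laplacian\mathbf{y}$, as desired.

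There is no serious obstacle: the statement is a formal monotonicity computation, and the only facts invoked are that the adjoints of a connection are order-preserving (via Proposition \ref{thm:aft}) and that arbitrary meets are monotone in their arguments. The single point worth recording explicitly is the monotonicity of the meet operation, which is immediate from its characterization as the greatest lower bound but deserves a line so that the chain of inequalities threading the two nested meets is fully justified.
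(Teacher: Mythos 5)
Your proof is correct and follows essentially the same route as the paper's: both unwind the definition \eqref{eq:laplacian} from the inside out, using that lower adjoints are join-preserving (hence order-preserving), that meets are monotone, and that upper adjoints are meet-preserving (hence order-preserving). The only cosmetic difference is that you cite Proposition \ref{thm:aft} explicitly and flag the monotonicity of meets as a separate step, which the paper leaves implicit.
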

\begin{proof}
Suppose $\mathbf{x} \preceq \mathbf{y}$ in $C^0(\Space{X}; \mathcal{F})$. Then,
\[
  \resmaplower{F}{w}{e}(x_w) \preceq  \resmaplower{F}{w}{e}(y_w)
\]
since $\mathcal{F}_{\sbt}$ is join-preserving, hence, order preserving. This implies
\[
\bigwedge_{w \in \boundary e} \resmaplower{F}{w}{e}(x_w) 
\preceq 
\bigwedge_{w \in \boundary e} \resmaplower{F}{w}{e}(y_w)
\]
which, since $\mathcal{F}^{\sbt}$ is meet-preserving and thus order-preserving, implies
\[
  \resmapupper{F}{v}{e} 
    \left( 
      \bigwedge_{w \in \boundary e} \resmaplower{F}{w}{e}(x_w) 
    \right) 
    \preceq 
    \resmapupper{F}{v}{e} 
    \left( 
      \bigwedge_{w \in \boundary e} \resmaplower{F}{w}{e}(y_w) 
    \right) .
\]
This is turn implies
\[
  \bigwedge_{e \in \coboundary v} \resmapupper{F}{v}{e} 
    \left(
      \bigwedge_{w \in \boundary e} \resmaplower{F}{w}{e}(x_w)   
    \right) 
  \preceq 
  \bigwedge_{e \in \coboundary v} \resmapupper{F}{v}{e} 
    \left(
      \bigwedge_{w \in \boundary e} \resmaplower{F}{w}{e}(y_w)   
    \right) .
\]
Hence, $\left(\Laplacian \mathbf{x}\right)_v \preceq \left(\Laplacian \mathbf{y} \right)_v$ for every $v$.
\end{proof}

% ---------------------------------------------------------------------------------------------------
\subsection{A Fixed Point Theorem}
\label{sec:FPT}
% ---------------------------------------------------------------------------------------------------
Although the Tarski Laplacian as defined has the feel of a diffusion-type operator (Lemma~\ref{lem:decomposition}), confirmation of its fitness as a Laplacian would be welcome. We provide such in the form of a Hodge-type theorem in grading zero. 

Recall the setting of a cellular sheaf $\mathcal{F}$ of inner-product spaces on $\Space{X}$ with Hodge Laplacian $\Laplacian$~\cite{hansen2019toward}. The fact that in degree zero $\Ker\Laplacian = H^0(\Space{X}; \mathcal{F}) = \Gamma(\Space{X}; \mathcal{F})$ means that the heat equation on $C^0(\Space{X}; \mathcal{F})$,
\[
  \frac{d\mathbf{x}}{dt} = -\alpha \Laplacian \mathbf{x} 
  \quad ; \quad
  \alpha>0 ,
\]  
sends 0-cochains asymptotically to the nearest global section as $t\to+\infty$. Discretizing this in time yields a discrete-time system
\begin{equation}
\label{eq:discretetime}
  \mathbf{x}_{t+1} = (\id - \eta\Laplacian)\mathbf{x}
  \quad ; \quad
  \eta>0 .
\end{equation} 
This system likewise converges asymptotically to $\fixedpoint(\id-\Laplacian)$: harmonic 0-cochains, global sections. 

To derive a similar result on the dynamics of cochains on $\cat{Ltc}$-valued cellular sheaves, it will be necessary to use a discrete-time diffusion, given the nature of lattices. In addition, it will be necessary to forget some of the structure of the full $\cat{Ltc}$-sheaf and reduce from $\mathcal{F}$ to $\sheaf{F}=U\circ\mathcal{F}$ using the forgetful functor of \eqref{eq:forgetful}. This makes possible a fixed point description of $\Gamma(\Space{X}; \sheaf{F})$ using an analogue of \eqref{eq:discretetime}.

\begin{lemma}
\label{lem:obvious}
For $\Laplacian$ the Tarski Laplacian, $(\id\meet\Laplacian)\mathbf{x}=\mathbf{x}$ is equivalent to $\Laplacian\mathbf{x} \succeq \mathbf{x}$. 
\end{lemma}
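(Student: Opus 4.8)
The plan is to unwind the definition of the operator $(\id \meet \Laplacian)$ and then reduce the claim to the elementary identity relating meet and order. First I would observe that $(\id \meet \Laplacian)\mathbf{x}$ denotes nothing more than the coordinate-wise meet $\mathbf{x} \meet \Laplacian\mathbf{x}$ taken in the product lattice $C^0(\Space{X}; \mathcal{F}) = \prod_{v \in \Space{X}_0} \mathcal{F}_v$, whose meets are computed coordinate-wise by definition of the product lattice. Thus the equation $(\id \meet \Laplacian)\mathbf{x} = \mathbf{x}$ is the single statement $\mathbf{x} \meet \Laplacian\mathbf{x} = \mathbf{x}$ in $C^0(\Space{X}; \mathcal{F})$.

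The key step is then to invoke the recovery of the partial order from the meet operation recalled in \S\ref{sec:lattices}, namely that in any lattice $a \meet b = a$ holds if and only if $a \preceq b$. Taking $a = \mathbf{x}$ and $b = \Laplacian\mathbf{x}$ in $C^0(\Space{X}; \mathcal{F})$ immediately yields that $\mathbf{x} \meet \Laplacian\mathbf{x} = \mathbf{x}$ is equivalent to $\mathbf{x} \preceq \Laplacian\mathbf{x}$, which is to say $\Laplacian\mathbf{x} \succeq \mathbf{x}$. This gives the desired equivalence in one line once the bookkeeping of the previous paragraph is in place.

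The main obstacle here is essentially notational rather than mathematical: one must be clear that $\id \meet \Laplacian$ is the pointwise meet of two endomorphisms of the product lattice (evaluated at $\mathbf{x}$ as $\mathbf{x} \meet \Laplacian\mathbf{x}$), and that order and meet in the product are determined coordinate-wise, so that the scalar lattice identity lifts verbatim to the product. There is no genuine difficulty, and in particular no appeal to the Tarski Fixed Point Theorem or to any special structure of $\Laplacian$ (such as Lemma~\ref{thm:tarski-order}) is required: the statement is a direct restatement of the definition of $\preceq$ via $\meet$. I would keep the write-up to a couple of sentences, as the content is precisely the characterization $\mathbf{x} \preceq \mathbf{y} \Leftrightarrow \mathbf{x} \meet \mathbf{y} = \mathbf{x}$ applied to $\mathbf{y} = \Laplacian\mathbf{x}$.
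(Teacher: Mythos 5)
Your proof is correct and is exactly the argument the paper intends: the paper's own proof is literally ``By definition,'' and your write-up simply makes explicit the coordinate-wise meet in $C^0(\Space{X};\mathcal{F})$ and the standard equivalence $a \meet b = a \Leftrightarrow a \preceq b$. No further commentary is needed.
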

\begin{proof}
By definition.
\end{proof}

%%%%%%%%%%%%%%%%%%%%%%%%%%%%%%%%%%%%%%%%%%%%%%%%%%%%%%%%%%%%%%%%%%%%%%%%%%%%%%%%%%%%%%%%%%%%%%%%%%%%%%%%%%%%%%%%%%
\begin{theorem}
\label{thm:main}
Let $\mathcal{F}$ be a $\cat{Ltc}$-valued cellular sheaf on $\Space{X}$ and $\Laplacian$ its Tarski Laplacian.
Then,
\begin{equation}
    \fixedpoint\left( \id \meet \Laplacian \right) = \Gamma(\Space{X};\sheaf{F}).
\end{equation}
\end{theorem}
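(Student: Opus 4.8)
The plan is to first invoke Lemma~\ref{lem:obvious} to replace the fixed-point condition $(\id\meet\Laplacian)\mathbf{x}=\mathbf{x}$ with the inequality $\Laplacian\mathbf{x}\succeq\mathbf{x}$, and then establish the set equality by proving two containments. Throughout I would abbreviate the inner edge-meet by writing $m_e = \bigwedge_{w\in\partial e}\resmaplower{F}{w}{e}(x_w)$, so that $(\Laplacian\mathbf{x})_v = \bigwedge_{e\in\coboundary v}\resmapupper{F}{v}{e}(m_e)$. The engine of both directions is the defining adjunction of a connection (Proposition~\ref{thm:connections}): its unit inequality $\resmapupper{F}{v}{e}\resmaplower{F}{v}{e}\succeq\id$ drives the inclusion $\Gamma\subseteq\fixedpoint$, while its biconditional $f_{\sbt}x\preceq y \Leftrightarrow x\preceq f^{\sbt}y$ drives the reverse inclusion.

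For $\Gamma(\Space{X};\sheaf{F})\subseteq\fixedpoint(\id\meet\Laplacian)$, I would take a global section $\mathbf{x}$, so that $\resmaplower{F}{w}{e}(x_w)=\resmaplower{F}{v}{e}(x_v)$ for all $v,w\in\partial e$. Then every term contributing to $m_e$ is equal, whence $m_e=\resmaplower{F}{v}{e}(x_v)$ and the Laplacian collapses to $(\Laplacian\mathbf{x})_v=\bigwedge_{e\in\coboundary v}\resmapupper{F}{v}{e}\resmaplower{F}{v}{e}(x_v)$. Each factor is $\succeq x_v$ by the unit inequality, so $x_v$ is a lower bound for the family and therefore $x_v\preceq(\Laplacian\mathbf{x})_v$. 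This yields $\Laplacian\mathbf{x}\succeq\mathbf{x}$.

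The reverse inclusion is the substantive direction. Assuming $\Laplacian\mathbf{x}\succeq\mathbf{x}$, I would fix a vertex $v$ and an edge $e\in\coboundary v$. Since $\resmapupper{F}{v}{e}(m_e)\succeq\bigwedge_{e'\in\coboundary v}\resmapupper{F}{v}{e'}(m_{e'})=(\Laplacian\mathbf{x})_v\succeq x_v$, the single-edge inequality $x_v\preceq\resmapupper{F}{v}{e}(m_e)$ holds. Transposing across the adjunction converts this into $\resmaplower{F}{v}{e}(x_v)\preceq m_e$. But $m_e$ is a meet that already includes the term $\resmaplower{F}{v}{e}(x_v)$, since $v\in\partial e$, so the reverse inequality $m_e\preceq\resmaplower{F}{v}{e}(x_v)$ is automatic; antisymmetry then forces $\resmaplower{F}{v}{e}(x_v)=m_e$. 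As this argument applies to \emph{every} $v\in\partial e$, all the values $\resmaplower{F}{v}{e}(x_v)$ coincide with the common quantity $m_e$, which is exactly the gluing condition of \eqref{eq:global-section} for $\sheaf{F}$. Hence $\mathbf{x}\in\Gamma(\Space{X};\sheaf{F})$.

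I expect the main point to be conceptual rather than computational: the appearance of $\sheaf{F}=U\circ\mathcal{F}$ in place of $\mathcal{F}$ itself. The fixed-point condition only ever constrains the lower adjoints $\resmaplower{F}{v}{e}$, so it detects precisely the $\cat{Sup}$-sections and is blind to any agreement of the upper adjoints. The one step demanding genuine care is the transposition, where one must apply the adjunction in the correct direction; once that is done, the matching inequality $m_e\preceq\resmaplower{F}{v}{e}(x_v)$ comes for free from $v$ being among the boundary vertices indexing the meet, and the proof closes.
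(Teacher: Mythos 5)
Your proposal is correct and follows essentially the same route as the paper's proof: reduce via Lemma~\ref{lem:obvious} to the inequality $\Laplacian\mathbf{x}\succeq\mathbf{x}$, use the unit $\resmapupper{F}{v}{e}\resmaplower{F}{v}{e}\succeq\id$ for the containment of global sections, and use the adjunction transposition $x_v\preceq\resmapupper{F}{v}{e}(m_e)\Leftrightarrow\resmaplower{F}{v}{e}(x_v)\preceq m_e$ for the converse. The only (cosmetic) difference is the last step: you close by antisymmetry against the common value $m_e$, which already contains the $v$-indexed term, whereas the paper derives $\resmaplower{F}{v}{e}(x_v)\preceq\resmaplower{F}{w}{e}(x_w)$ and then swaps the roles of $v$ and $w$ --- the same argument in two phrasings.
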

%%%%%%%%%%%%%%%%%%%%%%%%%%%%%%%%%%%%%%%%%%%%%%%%%%%%%%%%%%%%%%%%%%%%%%%%%%%%%%%%%%%%%%%%%%%%%%%%%%%%%%%%%%%%%%%%%%
\begin{proof}
Via Lemma \ref{lem:obvious}, suppose $\mathbf{x} \in \Gamma(\Space{X}; \sheaf{F})$. Then, every term in \\ $\bigwedge_{w \in \partial e} \resmaplower{F}{w}{e}(x_w)$ is equal by the description of global sections in \eqref{eq:global-section}. By Proposition \ref{thm:connections}, $\resmapupper{F}{v}{e} \resmaplower{F}{v}{e}(x_v) \succeq x_v$ for every cobounding edge $e$.
Hence, 
\[
  (\Laplacian \mathbf{x})_v 
    = 
  \bigwedge_{e \in \coboundary v} \resmapupper{F}{v}{e} \resmaplower{F}{v}{e}(x_v) \succeq x_v,
\]
and we conclude that global sections $\mathbf{x}$ satisfy $\Laplacian \mathbf{x} \succeq \mathbf{x}$.

For the converse, suppose $(\Laplacian \mathbf{x})_v \succeq x_v$ for every $v \in \Space{X}_0$. Then, for every vertex $v$ and cobounding edge $e \in \coboundary v$,
\[
  \resmapupper{F}{v}{e} \left(\bigwedge_{w \in \boundary e} \resmaplower{F}{w}{e}(x_w) \right) 
    \succeq 
  \bigwedge_{e \in \coboundary v} \resmapupper{F}{v}{e} \left(\bigwedge_{w \in \boundary e} \resmaplower{F}{w}{e}(x_w)   \right) 
    \succeq 
  x_v.
\]
Again, by Proposition \ref{thm:connections},
\[
   \bigwedge_{w \in \boundary e} \resmaplower{F}{w}{e}(x_w) \succeq \resmaplower{F}{v}{e}(x_v) ,
\]
which in turn implies for each $w \in \boundary e$,
\begin{equation}
  \resmaplower{F}{w}{e}(x_w) \succeq \resmaplower{F}{v}{e}(x_v).
\end{equation}
Reversing the roles of $v$ and $w$ gives, via the same argument, a reversed inequality, so that
\[
  \resmaplower{F}{v}{e}(x_v)= \resmaplower{F}{w}{e}(x_w) ,
\]
proving that $\mathbf{x} \in \Gamma(\Space{X}; \sheaf{F})$.
\end{proof}

\begin{corollary}
\label{cor:Post}
For $\mathcal{F}$ as in Theorem \ref{thm:main}, $\Gamma(\Space{X}; \sheaf{F}) = \mathrm{Post}(\Laplacian)$.
\end{corollary}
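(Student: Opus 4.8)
The plan is to observe that this corollary is an essentially immediate consequence of the preceding two results, obtained by reading off definitions and chaining equalities; there is no new content to establish beyond unwinding notation. The entire substance of the claim has already been absorbed into the proof of Theorem~\ref{thm:main}.

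First I would unfold the definition of the suffix-point set from \S\ref{sec:TFPT}, namely
\[
  \mathrm{Post}(\Laplacian) = \setof{\mathbf{x} \in C^0(\Space{X}; \mathcal{F}) ~:~ \Laplacian \mathbf{x} \succeq \mathbf{x}}.
\]
Next I would invoke Lemma~\ref{lem:obvious}, which states precisely that the defining inequality $\Laplacian \mathbf{x} \succeq \mathbf{x}$ is equivalent to the fixed-point condition $(\id \meet \Laplacian)\mathbf{x} = \mathbf{x}$. This identifies the suffix-point set with the fixed-point set, $\mathrm{Post}(\Laplacian) = \fixedpoint(\id \meet \Laplacian)$, as equal subsets of the product poset.

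Finally I would apply Theorem~\ref{thm:main}, which asserts $\fixedpoint(\id \meet \Laplacian) = \Gamma(\Space{X}; \sheaf{F})$, to conclude the desired equality $\mathrm{Post}(\Laplacian) = \Gamma(\Space{X}; \sheaf{F})$. Composing the three steps yields the chain
\[
  \mathrm{Post}(\Laplacian) = \fixedpoint(\id \meet \Laplacian) = \Gamma(\Space{X}; \sheaf{F}),
\]
which is exactly the statement.

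There is no real obstacle here: the corollary is a reformulation rather than a theorem with independent difficulty. The only point worth flagging is conceptual rather than technical, namely \emph{why} the reformulation is worth stating separately. The forward argument in Theorem~\ref{thm:main} already shows that a global section $\mathbf{x}$ satisfies $\Laplacian\mathbf{x} \succeq \mathbf{x}$, and the converse argument only ever uses the hypothesis $(\Laplacian\mathbf{x})_v \succeq x_v$ (never the stronger equality), so the suffix-point characterization is in fact the natural form of the argument; the corollary simply records this. All genuine work, in particular the mutual-inequality reversal of the roles of $v$ and $w$ that forces the restriction maps to agree on edges, lives in the proof of Theorem~\ref{thm:main} and need not be repeated.
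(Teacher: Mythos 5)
Your proposal is correct and matches the paper's proof exactly: the paper also proves this corollary by combining Lemma~\ref{lem:obvious} (identifying $\mathrm{Post}(\Laplacian)$ with $\fixedpoint(\id\meet\Laplacian)$) with Theorem~\ref{thm:main}. Your additional remarks on why the suffix-point formulation is the natural one are accurate but not needed for the argument.
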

\begin{proof} 
Lemma \ref{lem:obvious} combined with Theorem \ref{thm:main}.
\end{proof}

\begin{corollary}
\label{thm:sublattice}
For $\mathcal{F}$ as in Theorem \ref{thm:main}, with every vertex stalk complete, the limit
\begin{equation}
\lim \left( \cat{Fc}(\Space{X}) \xlongrightarrow{\sheaf{F}} \cat{cSup} \right) = \Gamma(\Space{X};\sheaf{F})
\end{equation}
exists and is a (nonempty) complete quasi-sublattice of $C^0(\Space{X}; \sheaf{F})$.
\end{corollary}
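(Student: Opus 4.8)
The plan is to derive the complete-lattice structure of $\Gamma(\Space{X};\sheaf{F})$ from the Tarski Fixed Point Theorem via the fixed-point identification of Theorem \ref{thm:main}, and to treat the existence of the limit separately by computing it explicitly as a subobject of the product $C^0(\Space{X};\sheaf{F})$. The two halves are only loosely coupled: one is an application of Theorem \ref{thm:tarski}, the other a product-and-equalizer computation in $\cat{cSup}$.

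For the lattice structure, I would first note that completeness is inherited coordinatewise: since each vertex stalk $\sheaf{F}_v$ is complete and meets and joins in a product are computed componentwise, $C^0(\Space{X};\sheaf{F}) = \prod_{v} \sheaf{F}_v$ is a complete lattice (indeed the product in $\cat{cSup}$). In particular the meets in \eqref{eq:laplacian} exist, so $\Laplacian$ is well-defined, and by Lemma \ref{thm:tarski-order} it is order-preserving; as the pointwise meet of the two order-preserving maps $\id$ and $\Laplacian$, the operator $\id\meet\Laplacian$ is then an order-preserving endomorphism of a complete lattice. Theorem \ref{thm:tarski} makes $\fixedpoint(\id\meet\Laplacian)$ a complete lattice, and Theorem \ref{thm:main} identifies this set with $\Gamma(\Space{X};\sheaf{F})$. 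Hence $\Gamma(\Space{X};\sheaf{F})$ is a lattice in its own right under all meets and joins, i.e.\ a complete quasi-sublattice of $C^0(\Space{X};\sheaf{F})$; it is nonempty because the bottom cochain $\mathbf{0}$ is a section (each lower adjoint $\resmaplower{F}{v}{e}$ preserves $0$), or simply because a complete lattice has a top and a bottom.

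For the existence of the limit in $\cat{cSup}$ I would compute it via products and equalizers inside $C^0(\Space{X};\sheaf{F})$. Because each $\sheaf{F}_v$ is complete, every lower adjoint $\resmaplower{F}{v}{e}$ preserves arbitrary joins (Proposition \ref{thm:aft} together with completeness). Consequently the subset of $C^0(\Space{X};\sheaf{F})$ cut out by the agreement conditions $\resmaplower{F}{v}{e}(x_v)=\resmaplower{F}{w}{e}(x_w)$ of \eqref{eq:global-section} is closed under arbitrary joins and contains $\mathbf{0}$, exhibiting it as the equalizer of the relevant join-preserving maps in $\cat{cSup}$. Together with the product this realizes $\lim(\cat{Fc}(\Space{X})\xlongrightarrow{\sheaf{F}}\cat{cSup})$, and as a set and poset it is exactly $\Gamma(\Space{X};\sheaf{F})$, so the limit exists and agrees with the global sections.

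The step demanding the most care, and the one I expect to be the main obstacle, is reconciling these two descriptions of the \emph{same} object. The limit is a subobject in $\cat{cSup}$: it inherits arbitrary joins from $C^0(\Space{X};\sheaf{F})$, but its meets are in general \emph{not} the ambient ones; they are the corrected meets $\bigvee\setof{\mathbf{y}\in\Gamma(\Space{X};\sheaf{F}) : \mathbf{y}\preceq\bigwedge S}$, which is precisely the meet operation Tarski's theorem supplies on a fixed-point set. This is exactly why the weakened notion of quasi-sublattice (rather than sublattice) is the correct one, and I would be explicit that the join, but not in general the meet, coincides with that of $C^0(\Space{X};\sheaf{F})$. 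Beyond this conceptual point the only genuine computation is the join-closure deduced from join-preservation of the lower adjoints; the remainder is an assembly of Theorem \ref{thm:main}, Theorem \ref{thm:tarski}, and the standard fact that $\cat{cSup}$ has products and equalizers.
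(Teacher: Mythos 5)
Your proposal is correct, and its first half is exactly the paper's proof, which consists in its entirety of three moves: $\id\meet\Laplacian$ is order-preserving (Lemma \ref{thm:tarski-order}), $C^0(\Space{X};\sheaf{F})$ is a complete lattice, so the Tarski Fixed Point Theorem (Theorem \ref{thm:tarski}) applies, and Theorem \ref{thm:main} identifies the fixed-point set with $\Gamma(\Space{X};\sheaf{F})$; the paper treats the existence of the limit and its identification with $\Gamma(\Space{X};\sheaf{F})$ as already settled by the background description \eqref{eq:global-section}. Where you genuinely diverge is your second half: the paper never performs the equalizer computation in $\cat{cSup}$, whereas you build the limit explicitly and thereby obtain a second, Tarski-free proof of completeness --- the agreement set is closed under arbitrary coordinatewise joins (lower adjoints preserve all existing joins, and the vertex stalks are complete) and contains $\mathbf{0}$, and a subset of a complete lattice closed under arbitrary joins, the empty join included, is itself a complete lattice in the induced order. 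This buys you strictly more than the paper's argument: it shows that joins in $\Gamma(\Space{X};\sheaf{F})$ coincide with ambient joins in $C^0(\Space{X};\sheaf{F})$ while meets must be corrected to $\bigvee\setof{\mathbf{y}\in\Gamma(\Space{X};\sheaf{F}) : \mathbf{y}\preceq\bigwedge S}$, information that Tarski's theorem alone cannot supply and precisely the reason the statement says quasi-sublattice rather than sublattice; what the paper's route buys instead is economy and thematic unity with the fixed-point machinery of \S\ref{sec:Tarski}. One caveat, at the same level of precision as the paper itself: your equalizer imposes agreement only over $1$-cells, so identifying it with $\lim\left(\cat{Fc}(\Space{X}) \xlongrightarrow{\sheaf{F}} \cat{cSup}\right)$ over the full face poset silently invokes the background fact \eqref{eq:global-section} that vertex data agreeing over edges determines compatible data on all higher cells; you should cite that explicitly, just as Theorem \ref{thm:main} does.
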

\begin{proof}
By Lemma \ref{thm:tarski-order}, $\Laplacian$ is order-preserving which implies $\id\meet\Laplacian$ is order-preserving.
The Tarski Fixed Point Theorem (Theorem \ref{thm:tarski}) and Theorem \ref{thm:main} complete the proof.
\end{proof}

Note that Corollary \ref{thm:sublattice} is strictly stronger than the existence of \\ $\lim \left(\sheaf{F}: \cat{Fc}(\Space{X}) \rightarrow \cat{cSup} \right)$. Since $\Gamma(\Space{X}; \sheaf{F})$ is a complete quasi-sublattice, arbitrary joins and meets of global sections exist, and, in particular, there are unique maximum and minimum global sections, even when $C^0(\Space{X}; \mathcal{F})$ is not finite.

\begin{example}
For $\Space{X}$ a 1-dimensional cell complex (a graph), the combinatorial graph Laplacian \cite{chung1997spectral} can be seen as the Hodge Laplacian of the constant sheaf of a fixed vector space. As an endmorphism on $C^0(\space{X})$, the graph Laplacian has kernel equal to the (locally) constant 0-cochains, and its dimension is the number of connected components of the graph. 

What is the Tarski analogue? Consider the constant sheaf on $\space{X}$ whose stalks are all a fixed lattice with all restriction maps the identity. The Tarski Laplacian performs a local meet with neighbors. In this case, too, the harmonic 0-cochains are precisely those which are (locally) constant. The Tarski Laplacian generalizes the graph Laplacian.
\end{example}

% ---------------------------------------------------------------------------------------------------
\subsection{Tarski Cohomology}
\label{sec:TC}
% ---------------------------------------------------------------------------------------------------

Theorem \ref{thm:main} gives an argument for the Tarski Laplacian as the ``right'' definition for $\cat{Ltc}$-valued sheaves; however, it only applies in grading zero. This is due to the difficulty of defining a natural non-abelian sheaf cohomology for $\cat{Ltc}$-valued sheaves (see \S\ref{sec:grandis}-\ref{sec:example}). Realizing the zeroth cohomology -- the global sections -- in terms of a fixed point theorem points the way to a general cohomology theory fitted to lattice-valued sheaves. 

Theorem~\ref{thm:main} and Corollary~\ref{thm:sublattice} inspire the following definitions: in what follows, $\Space{X}$ is a cell complex and $\mathcal{F}$ a $\cat{Ltc}$-valued cellular sheaf on $\Space{X}$. 

\begin{definition}
The \define{Tarski Laplacian in degree $k$} is the lattice map, $\Laplacian_k: C^k(\Space{X};\mathcal{F}) \longrightarrow C^k(\Space{X};\mathcal{F})$ acting on a $k$-cochain $\mathbf{x}$ and $k$-cell $\sigma$ via
\begin{equation}
\label{eq:laplacian-general}
  (\Laplacian_k \mathbf{x})_{\sigma} 
  = 
  \bigwedge_{\tau \in \coboundary \sigma } 
  \resmapupper{F}{\sigma}{\tau} 
  \left(
    \bigwedge_{\sigma' \in \boundary \tau} 
    \resmaplower{F}{\sigma'}{\tau}(x_{\sigma'})   
  \right) .
\end{equation}
\end{definition}
The map $\Laplacian_k$ is seen to be order-preserving by the same argument as in the proof of Lemma \ref{thm:tarski-order}.
\begin{definition}
The \define{Tarski cohomology}, $\TarH^\bullet(\Space{X};\mathcal{F})$, of a cellular sheaf $\mathcal{F}$ valued in $\cat{Ltc}$ is
\begin{equation}
  \TarH^k(\Space{X}; \mathcal{F}) 
  =
  \fixedpoint(\id\meet\Laplacian_k)
  = 
  \mathrm{Post}(\Laplacian_k) .
\end{equation}
% SINCE THERE ARE COMPETING THEORIES, PERHAPS WE SHOULD NOT DO THIS QUITE YET
% Such cochains will be called \define{harmonic}.
\end{definition}

\begin{lemma}
\label{thm:completeTH}
If $\mathcal{F}$ is valued in complete lattices and connections, $\cat{cLtc}$, then $\TarH^k(\Space{X}; \mathcal{F})$ is a (non-empty) complete quasi-sublattice of $C^k(\Space{X}; \mathcal{F})$.
\end{lemma}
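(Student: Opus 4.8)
The plan is to mirror the proof of Corollary~\ref{thm:sublattice}, replacing the degree-zero Tarski Laplacian with $\Laplacian_k$ and applying the Tarski Fixed Point Theorem to the cochain lattice $C^k(\Space{X}; \mathcal{F})$. First I would check that $C^k(\Space{X}; \mathcal{F})$ is a complete lattice. Since $\mathcal{F}$ is valued in $\cat{cLtc}$, each stalk $\mathcal{F}_\sigma$ with $\dim\sigma = k$ is complete; the cochain object $C^k(\Space{X}; \mathcal{F}) = \prod_{\dim\sigma = k} \mathcal{F}_\sigma$ is therefore a product of complete lattices, which is again complete, with arbitrary meets and joins computed coordinatewise.

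Next I would verify that $\id\meet\Laplacian_k$ is an order-preserving endomorphism of $C^k(\Space{X}; \mathcal{F})$. The map $\Laplacian_k$ is order-preserving by the same chain of inequalities used in Lemma~\ref{thm:tarski-order}: each lower adjoint $\resmaplower{F}{\sigma'}{\tau}$ preserves joins and hence order, inner meets preserve order, and each upper adjoint $\resmapupper{F}{\sigma}{\tau}$ preserves meets and hence order, so the outer meet over $\tau\in\coboundary\sigma$ is order-preserving as well. Since $\id$ is trivially order-preserving and the binary meet is monotone in each argument, the pointwise meet $\id\meet\Laplacian_k$ is order-preserving.

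I would then apply the Tarski Fixed Point Theorem (Theorem~\ref{thm:tarski}) to the order-preserving endomorphism $\id\meet\Laplacian_k$ of the complete lattice $C^k(\Space{X}; \mathcal{F})$, concluding that its fixed point set $\TarH^k(\Space{X}; \mathcal{F}) = \fixedpoint(\id\meet\Laplacian_k)$ is a complete lattice in its own right, i.e.\ a complete quasi-sublattice. Nonemptiness is immediate, since a complete lattice has bottom and top elements; concretely, $\mathbf{0} \in \mathrm{Post}(\Laplacian_k)$ because $\Laplacian_k\mathbf{0} \succeq \mathbf{0}$ trivially, and $\mathrm{Post}(\Laplacian_k) = \fixedpoint(\id\meet\Laplacian_k)$ by the generalization of Lemma~\ref{lem:obvious}. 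There is essentially no analytic obstacle here beyond bookkeeping; the one point worth emphasizing is that ``complete quasi-sublattice'' is exactly the content of Tarski's theorem, namely that the guaranteed meets and joins of $\TarH^k(\Space{X}; \mathcal{F})$ are taken within the fixed point set and need not agree with the coordinatewise operations of the ambient product $C^k(\Space{X}; \mathcal{F})$.
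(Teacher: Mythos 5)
Your proof is correct and follows essentially the same route as the paper, which simply cites the Tarski Fixed Point Theorem applied to the order-preserving endomorphism $\id\meet\Laplacian_k$ of the complete lattice $C^k(\Space{X};\mathcal{F})$. You merely make explicit the details the paper leaves implicit (completeness of the product of stalks, order-preservation of $\Laplacian_k$ via Lemma~\ref{thm:tarski-order}, and nonemptiness), including the correct observation that the fixed-point set's meets and joins need not be the ambient coordinatewise ones, which is exactly why the conclusion is a \emph{quasi}-sublattice.
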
 
\begin{proof}
This follows immediately from the Tarski Fixed Point Theorem.
\end{proof}

% ---------------------------------------------------------------------------------------------------
\subsection{Harmonic Flow}
\label{sec:diffusion}
% ---------------------------------------------------------------------------------------------------

Fixed point theorems come in implicit [Brouwer, Lefschetz] and explicit [Banach] forms. Theorem \ref{thm:main} gives a fixed-point description of global sections which, along with higher Tarski cohomology, can be made constructive via diffusion dynamics -- using the Laplacian to define a discrete-time heat equation on cochains.   

\begin{definition}
\label{def:harmonic}
Define the \define{harmonic flow} $\Flow:\nats \times C^\bullet(\Space{X};\mathcal{F})\to C^\bullet(\Space{X};\mathcal{F})$ as $\Flow_t=\Flow(t,\cdot)=(\id\meet\Laplacian)^t$.
\end{definition}

We say that cochain $\mathbf{x}$ \textit{converges} with respect to harmonic flow, writing $\mathbf{x} \rightarrow \mathbf{x}^\ast$, if there exist $T\geq 0$ such that $\Phi_{T}(\mathbf{x}) = \Phi_{T+t}(\mathbf{x})$ for all $t \in \nats$. Then, $\mathbf{x} \rightarrow \mathbf{x}^\ast$ if and only if $\mathbf{x}^\ast \in \fixedpoint(\id\meet\Laplacian) = \mathrm{Post}(\Laplacian) = \TarH^\bullet(\Space{X}; \mathcal{F})$. 

\begin{theorem}
\label{thm:convergence}
Let $\mathcal{F}$ be a $\cat{Ltc}$-valued sheaf on a cell complex $\Space{X}$ such that (1) the number of $k$-cells is finite, and (2) the stalks over the $k$-cells satisfy the descending chain condition (DCC). Then, for some finite $t>0$, $\Flow_t$ is a projection map from $C^\bullet$ to $\TarH^\bullet$. 
\end{theorem}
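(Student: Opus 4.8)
The plan is to treat the flow one degree at a time, since $\Flow_t=(\id\meet\Laplacian)^t$ acts within each factor $C^k(\Space{X};\mathcal{F})$ via $\Laplacian_k$, and to show that on each such factor the map $g:=\id\meet\Laplacian_k$ collapses every orbit onto its fixed-point set. First I would record two structural facts about $g$. It is order-preserving: $\Laplacian_k$ is order-preserving by the argument of Lemma~\ref{thm:tarski-order}, the identity trivially is, and a coordinatewise meet of order-preserving maps is again order-preserving. It is also contracting, since $g(\mathbf{x})=\mathbf{x}\meet\Laplacian_k\mathbf{x}\preceq\mathbf{x}$. Consequently each orbit $\mathbf{x}\succeq g\mathbf{x}\succeq g^2\mathbf{x}\succeq\cdots$ is a descending chain in $C^k$, and by Lemma~\ref{lem:obvious} its limit, once attained, lies in $\fixedpoint(g)=\TarH^k(\Space{X};\mathcal{F})$.

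Next I would show that each $C^k(\Space{X};\mathcal{F})=\prod_{\dim\sigma=k}\mathcal{F}_\sigma$ satisfies the descending chain condition, the product analogue of Lemma~\ref{thm:graded-dcc}. Given an infinite strictly descending chain in the product, each coordinate sequence descends weakly and hence stabilizes by hypothesis (2); by hypothesis (1) there are only finitely many coordinates, so all stabilize simultaneously after a finite stage, contradicting strict descent. With $C^k$ now DCC, every orbit of $g$ stabilizes after finitely many steps, giving for each $\mathbf{x}$ a time $t_{\mathbf{x}}$ with $g^{t_{\mathbf{x}}}\mathbf{x}=g^{t_{\mathbf{x}}+s}\mathbf{x}\in\TarH^k$ for all $s$. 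Pointwise, then, $\Flow_t$ converges to a retraction onto $\TarH^\bullet$, and since fixed points are fixed, $\Flow_t$ restricts to the identity on $\TarH^\bullet$ for every $t$.

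The hard part is upgrading this orbit-by-orbit convergence to a single finite $t$ for which $\Flow_t$ is genuinely idempotent, i.e.\ for which the stabilization times $t_{\mathbf{x}}$ are uniformly bounded. Indeed $\Flow_t$ is a projection onto $\TarH^\bullet$ precisely when $\operatorname{im}(\Flow_t)\subseteq\TarH^\bullet$, which is exactly uniform stabilization: then $\Flow_t$ fixes its own image and $\Flow_{2t}=\Flow_t$. When the stalks have finite height this is immediate, since by Lemma~\ref{thm:height-product} the product $C^k$ has height $\sum_{\dim\sigma=k}\height(\mathcal{F}_\sigma)=:H_k<\infty$, so no descending chain — in particular no orbit — can make more than $H_k$ strict steps, and $t=\max_k H_k$ works simultaneously for all cochains.

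Under the DCC hypothesis alone there is no such height bound, so I would instead exploit the special form of the flow. By Lemma~\ref{lem:decomposition} the expanding summand is absorbed by the meet with $\id$, so $g$ reduces to the twisted infimum-diffusion
\[
  (g\mathbf{x})_v = x_v\meet\bigwedge_{\substack{e\in\coboundary v\\ w\in\boundary e\setminus\setof{v}}}\resmapupper{F}{v}{e}\resmaplower{F}{w}{e}(x_w),
\]
in which the new value at a cell never depends on its own old value through the mixing term. The goal would be to show that such min-consensus-with-transport equilibrates along the $k$-skeleton in a number of steps controlled by a purely combinatorial quantity of $\Space{X}$ (on the order of the number of $k$-cells, or a diameter) that is independent of the starting cochain. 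I expect the Galois regularity of each restriction connection — the transport maps $\resmapupper{F}{v}{e}\resmaplower{F}{w}{e}$ are adjoint composites, not arbitrary order maps — to be precisely what prevents the value at a cell from strictly descending for unboundedly many steps, the way an unconstrained shift on an infinite-height DCC stalk otherwise could. Establishing this uniform combinatorial bound, thereby reconciling ``finitely many cells plus DCC'' with a single working $t$, is where I expect the real work to lie.
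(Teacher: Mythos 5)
Your first two paragraphs are correct, and they already contain everything the paper's own proof says: the paper argues exactly that each time-step involves a meet with $\id$, so orbits descend, and that hypotheses (1) and (2) ``ensure finite termination of all initial conditions'' --- i.e.\ your observation that a finite product of DCC posets is DCC, so every orbit stabilizes pointwise. Where you go beyond the paper is your third paragraph, and your diagnosis there is exactly right: the theorem asserts a single finite $t$ valid for all cochains, this is equivalent to $\Flow_t(C^\bullet)\subseteq\TarH^\bullet$, and pointwise stabilization does not deliver it. Your finite-height argument via Lemma~\ref{thm:height-product}, giving $t=\max_k H_k$, is also correct. The paper's proof never addresses this step; it silently treats pointwise termination as if it were uniform.

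The genuine gap is that the program in your final paragraph cannot be completed: under hypotheses (1)--(2) alone the stabilization times really are unbounded, and the Galois regularity of the transport maps does not prevent this --- adjoint composites can realize exactly the ``shift'' maps you hoped they would forbid. Concretely, let $\lattice{L}$ be the complete lattice consisting of $0$, $1$, and pairwise-incomparable chains $a_{n,1}\prec a_{n,2}\prec\cdots\prec a_{n,n}$, one for each $n\geq 1$; it satisfies DCC but has unbounded height. Let $\lattice{M}$ consist of $0$, $1$, and, for each $n$, a stack of $n$ diamonds
\[
  c_{n,0}\prec\setof{f_{n,1},h_{n,1}}\prec c_{n,1}\prec\setof{f_{n,2},h_{n,2}}\prec\cdots\prec\setof{f_{n,n},h_{n,n}}\prec c_{n,n},
\]
with $f_{n,k}$ and $h_{n,k}$ incomparable and distinct stacks incomparable; $\lattice{M}$ is again a complete lattice satisfying DCC. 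Take $\Space{X}$ to be a single edge $e$ with endpoints $u,v$, set $\mathcal{F}_u=\mathcal{F}_v=\lattice{L}$, $\mathcal{F}_e=\lattice{M}$, and define lower adjoints by $\resmaplower{F}{u}{e}(a_{n,k})=f_{n,k}$, $\resmaplower{F}{v}{e}(a_{n,k})=h_{n,k}$, with $0\mapsto 0$ and $1\mapsto 1$; both preserve arbitrary joins (cross-chain joins are $1$ in both lattices), hence are lower halves of legitimate connections. Since $f_{n,k}\meet h_{n,k}=c_{n,k-1}$ and $\resmapupper{F}{u}{e}(c_{n,k-1})=\bigvee\setof{x:\resmaplower{F}{u}{e}(x)\preceq c_{n,k-1}}=a_{n,k-1}$ (where $a_{n,0}:=0$), the flow satisfies $(\id\meet\Laplacian)(a_{n,k},a_{n,k})=(a_{n,k-1},a_{n,k-1})$: both transports $\resmapupper{F}{u}{e}\resmaplower{F}{v}{e}$ and $\resmapupper{F}{v}{e}\resmaplower{F}{u}{e}$ are precisely the ``shift down one level'' map. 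The only sections are $(0,0)$ and $(1,1)$, so the orbit of $(a_{n,n},a_{n,n})$ needs exactly $n$ steps to reach $\TarH^0$; taking $n=t+1$ shows $\Flow_t(C^0)\not\subseteq\TarH^0$ for every finite $t$. The honest conclusions are: the uniform statement of Theorem~\ref{thm:convergence} requires a finite-height hypothesis (your Lemma~\ref{thm:height-product} argument, which is also what validates the paper's follow-up remark about finite or ranked stalks, since a bounded graded lattice has finite height); under DCC alone, only the pointwise statement --- every orbit reaches $\TarH^\bullet$ in finite time --- survives, and that is all the paper's proof, and your first two paragraphs, actually establish.
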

\begin{proof}
Since each time-step of $\Flow$ involves a meet with $\id$, an orbit of $\Flow$ is either descending or eventually fixed. The hypotheses ensure finite termination of all initial conditions.
\end{proof}

This implies that finite or ranked stalks suffice to guarantee global finite-time convergence of the harmonic flow. Optimal bounds on the number of iterations is an interesting question; naive bounds given by heights of stalks are an exercise left to the reader. An in-depth study of the convergence properties of the Tarski Laplacian may very well imply a notion of an eigenvector/eigenvalue of a lattice endomorphism.

%
%
% @@@@@@@@@@@@@@@@@@@@@@@@@@@@@@@@@@@@@@@@@@@@@@@@@@@@@@@@@@@@@@@@@@@@@@@@@@@@@@@@@@@@@@@@@@@@@@@@@@@@@@@@@@@@@@@@@
\section{Towards Hodge Cohomology}
\label{sec:hodge}
% @@@@@@@@@@@@@@@@@@@@@@@@@@@@@@@@@@@@@@@@@@@@@@@@@@@@@@@@@@@@@@@@@@@@@@@@@@@@@@@@@@@@@@@@@@@@@@@@@@@@@@@@@@@@@@@@@

Given a cellular sheaf of lattices $\mathcal{F}$ over $\Space{X}$, it would be satisfying to have a cochain complex $(C^\bullet(X;\mathcal{F}), \coboundary)$ with sheaf cohomology from which a classical Laplacian $\Laplacian=\coboundary^*\coboundary+\coboundary\coboundary^*$ could be defined and showed isomorphic to the Tarski cohomology of \S\ref{sec:TC}. This section works to that end, following certain techniques introduced by Grandis \cite{grandis2013homological}.

% ---------------------------------------------------------------------------------------------------
\subsection{Homological Algebra of Lattices}
\label{sec:HAL}

Sheaves taking values in $\cat{Ltc}$ have enough structure to do cohomology. In $\cat{Ltc}$, the \define{zero morphism}, $(0_{\sbt}, 0^{\sbt}):\lattice{X}\to\lattice{Y}$, is the connection where $0_{\sbt}:\lattice{X}\mapsto 0$ and $0^{\sbt}:\lattice{Y}\mapsto 1$.  A morphism $f: \lattice{X} \rightarrow \lattice{Y}$ has kernel and cokernel given by equalizer and coequalizer respectively:
\[
\begin{tikzcd}
\Ker f \ar[r,"\ker f"] & \lattice{X} \ar[r, shift left= .75ex, "f"]
\ar[r, shift right = 0.75ex, swap, "0"] & \lattice{Y}
\\
 \lattice{X} \ar[r, shift left= .75ex, "f"]
\ar[r, shift right = 0.75ex, swap, "0"] & \lattice{Y} \ar[r,"\cok f"] & \Cok f
\end{tikzcd}.
\]
In $\cat{Ltc}$, one checks that these satisfy
\[
  \Ker f =~ \downset{f^{\sbt} (0)} = \setof{x \in \lattice{X}:~f_{\sbt}(x) = 0} ,
\]
\[
  \Cok f =~ \upset{f_{\sbt} (1)} = \setof{y \in \lattice{Y}:~f^{\sbt} (y) = 1} .
\]

For a morphism $f$, define its \define{normal image} by the connection, $\ker(\cok f)$, and the quasi-sublattice,
\[
\Nim f =~ \downset{f_{\sbt}(1)} .
\]
The categorial product in $\cat{Ltc}$ is the cone
\begin{equation}
\label{eq:product}
\begin{tikzcd}
  \lattice{X}  & \lattice{X} \prod \lattice{Y} \ar[l, "p",swap] \ar[r, "q"] & \lattice{Y} ,
\end{tikzcd}
\end{equation}
where $\lattice{X} \prod \lattice{Y}$ is the cartesian-product lattice, and where $p_{\sbt}(x,y) = x$, $p^{\sbt}(x)= (x, 1)$, and $q_{\sbt}(x,y) = y$, $q^{\sbt} (y) = (1, y)$.
The coproduct is the cocone
\begin{equation}
\begin{tikzcd}
  \lattice{X} \ar[r, "i"] & \lattice{X} \coprod \lattice{Y}   & \lattice{Y} \ar[l, "j",swap] ,
\end{tikzcd}
\end{equation}
where $\lattice{X} \coprod \lattice{Y}$ is again the cartesian-product lattice, and where $i_{\sbt}(x) = (x, 0)$, $i^{\sbt}(x,y) = x$, and $j_{\sbt}(y) = (0, y)$, $j^{\sbt}(x,y) = y$.

Recall that a \define{semi-additive category} is a pointed category with a \emph{biproduct}, denoted $\times$: a special product that is compatible with the coproduct and coincides with the coproduct on objects. The category $\cat{Ltc}$ is semi-additive; compatibility is satisfied, as $p \circ i = \id$ and $q \circ j = \id$, as well as $p \circ j = 0$ and $q \circ i = 0$. The objects $\lattice{X} \prod \lattice{Y}$ and $\lattice{X} \coprod \lattice{Y}$ coincide by definition.

\begin{lemma}
\label{lem:semi-additive}
Semi-additive categories are enriched in abelian monoids.
\end{lemma}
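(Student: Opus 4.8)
The plan is to establish the standard fact that in a semi-additive category one can define an addition on each hom-set using the biproduct structure, and that this addition turns each $\Hom(\lattice{X},\lattice{Y})$ into a commutative monoid with the zero morphism as unit, with composition bilinear. First I would recall the two canonical maps associated to the biproduct: the \emph{diagonal} $\diagonal:\lattice{X}\to\lattice{X}\times\lattice{X}$, characterized by $p\circ\diagonal = q\circ\diagonal = \id$, and the \emph{codiagonal} $\codiagonal:\lattice{Y}\times\lattice{Y}\to\lattice{Y}$, characterized by $\codiagonal\circ i = \codiagonal\circ j = \id$. These exist by the universal properties of product and coproduct respectively (using that in a pointed category the off-diagonal components are the zero morphisms $p\circ j = q\circ i = 0$, as verified in the excerpt). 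Then for a parallel pair $f,g:\lattice{X}\to\lattice{Y}$ I would define their sum by
\[
  f + g = \codiagonal\circ(f\times g)\circ\diagonal ,
\]
where $f\times g:\lattice{X}\times\lattice{X}\to\lattice{Y}\times\lattice{Y}$ is the map induced on biproducts.

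Next I would verify the monoid axioms. Commutativity follows from the symmetry of the biproduct: the swap automorphism $\lattice{X}\times\lattice{X}\to\lattice{X}\times\lattice{X}$ fixes $\diagonal$ and the corresponding swap on $\lattice{Y}\times\lattice{Y}$ fixes $\codiagonal$, so $f+g = g+f$. For the unit law, I would compute $f + 0$ by tracking the zero component through $\codiagonal\circ(f\times 0)\circ\diagonal$ and using the biproduct compatibility relations $p\circ i = \id$, $q\circ j=\id$, $p\circ j = 0$, $q\circ i = 0$ to collapse it to $f$. Associativity is a diagram chase using that iterated biproducts are canonically associative. The bilinearity of composition (distributivity of $\circ$ over $+$ on both sides) follows because precomposing or postcomposing with a fixed morphism commutes with the diagonal and codiagonal constructions, respectively; this is where one uses that $h\circ(-)$ preserves the biproduct-induced maps.

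The step I expect to be the main obstacle — or at least the one requiring the most care — is confirming that the abstract biproduct maps interact correctly, i.e.\ that the compatibility identities $p\circ i = q\circ j = \id$ and $p\circ j = q\circ i = 0$ (already recorded in the excerpt for $\cat{Ltc}$) are exactly what is needed to make the above definitions well-behaved, and in particular that the sum is independent of how one brackets the biproduct. In fact, the cleanest route avoids recomputing everything by hand: this lemma is a purely formal consequence of the semi-additive axioms, so I would present it as an instance of the general categorical theorem that every semi-additive category carries a unique enrichment in commutative monoids, compatible with composition, whose existence and uniqueness are standard. The concrete verifications for $\cat{Ltc}$ reduce to the already-established relations $p\circ i = \id$, $q\circ j = \id$, $p\circ j = 0$, $q\circ i = 0$, so I would either cite the general result or sketch the diagram chase above, noting that uniqueness of the monoid structure follows because the addition is forced by the requirement that $i,j$ exhibit a biproduct.
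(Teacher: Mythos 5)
Your proposal is correct and takes essentially the same approach as the paper: both define the sum of a parallel pair by $f+g = \codiagonal\circ(f\times g)\circ\diagonal$ using the biproduct's diagonal and codiagonal maps, and the monoid axioms you sketch (commutativity via the swap symmetry, the unit law via the compatibility relations $p\circ i = \id$, $q\circ i = 0$, etc.) are precisely the verifications the paper leaves as an exercise to the reader.
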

\begin{proof}
The diagonal ($\diagonal$) and codiagonal ($\codiagonal$) morphisms come out of the product and coproduct respectively. For $f, g \in \hom(\lattice{X}, \lattice{Y})$, $f+g$ is the composition in the diagram
\begin{equation}
\begin{tikzcd}
\lattice{X} \ar[r,"\diagonal"] \ar[rrr,out=-30,in=210,swap,"f+g"] & \lattice{X} \times \lattice{X} \ar[r, "f \times g"] & \lattice{Y} \times \lattice{Y} \ar[r, "\codiagonal"] & \lattice{Y}
\end{tikzcd}.
\end{equation}
We leave the details of showing $f + g = g + f$ and $f + 0 = f$ as an exercise to the reader.
\end{proof}
\begin{corollary}
\label{thm:semi-additive}
In $\cat{Ltc}$, $\diagonal$ is the connection, $\diagonal_{\sbt}(x) = (x, x)$, $\diagonal^{\sbt}(x,x') = x \meet x'$, and $\nabla$ is the connection, $\codiagonal_{\sbt} (y, y') = y \join y'$, $\codiagonal^{\sbt} (y) = (y, y)$.
Furthermore, $f + g$ is the following connection:
\[
(f+g)_{\sbt} (x) = f_{\sbt}(x) \join g_{\sbt}(x) ,
\]
\[
(f + g)^{\sbt} (y) = f^{\sbt}(y) \meet g^{\sbt} (y) .
\]
\end{corollary}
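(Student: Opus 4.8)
The plan is to extract $\diagonal$ and $\codiagonal$ from the universal properties of the product~\eqref{eq:product} and coproduct, and then read off $f+g$ by composing adjoints. The diagonal $\diagonal\colon\lattice{X}\to\lattice{X}\prod\lattice{X}$ is the unique morphism satisfying $p\circ\diagonal=q\circ\diagonal=\id$, so it suffices to verify that the proposed pair $\diagonal_{\sbt}(x)=(x,x)$, $\diagonal^{\sbt}(x,x')=x\meet x'$ is a connection realizing those two equations. The connection property is immediate from the adjunction criterion: $\diagonal_{\sbt}(x)=(x,x)\preceq(y,y')$ holds iff $x\preceq y$ and $x\preceq y'$, that is, iff $x\preceq y\meet y'=\diagonal^{\sbt}(y,y')$. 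The defining equations then follow from the explicit projections of~\eqref{eq:product}: $p_{\sbt}\diagonal_{\sbt}(x)=x$ and $\diagonal^{\sbt}p^{\sbt}(x)=\diagonal^{\sbt}(x,1)=x$, and symmetrically for $q$. Uniqueness of the universal morphism then identifies this pair as $\diagonal$.

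The codiagonal is handled dually. Here $\codiagonal$ is the unique morphism with $\codiagonal\circ i=\codiagonal\circ j=\id$, and I would check that $\codiagonal_{\sbt}(y,y')=y\join y'$, $\codiagonal^{\sbt}(y)=(y,y)$ is a connection via $y\join y'\preceq z$ iff $y\preceq z$ and $y'\preceq z$ iff $(y,y')\preceq(z,z)=\codiagonal^{\sbt}(z)$. The coproduct injections then give $\codiagonal_{\sbt}i_{\sbt}(y)=y\join 0=y$ and $i^{\sbt}\codiagonal^{\sbt}(y)=i^{\sbt}(y,y)=y$, with the analogous computation for $j$, so uniqueness again pins down $\codiagonal$.

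For the final claim, Lemma~\ref{lem:semi-additive} defines $f+g=\codiagonal\circ(f\times g)\circ\diagonal$. The (bi)product morphism acts coordinatewise on both adjoints, $(f\times g)_{\sbt}(x,x')=(f_{\sbt}x,g_{\sbt}x')$ and $(f\times g)^{\sbt}(y,y')=(f^{\sbt}y,g^{\sbt}y')$; this is verified from the defining equations of $f\times g$ exactly as for $\diagonal$, the upper-adjoint relations using that each of $f^{\sbt},g^{\sbt}$ preserves the top element. Composing lower adjoints in order gives $(f+g)_{\sbt}(x)=\codiagonal_{\sbt}(f_{\sbt}x,g_{\sbt}x)=f_{\sbt}(x)\join g_{\sbt}(x)$, while the composition rule in $\cat{Ltc}$ composes upper adjoints in the reverse order, yielding $(f+g)^{\sbt}(y)=\diagonal^{\sbt}(f^{\sbt}y,g^{\sbt}y)=f^{\sbt}(y)\meet g^{\sbt}(y)$.

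I expect the only mildly delicate point to be the order bookkeeping — that the upper adjoint of a composite reverses the factors — together with confirming the coordinatewise form of $f\times g$. Everything else reduces immediately to the adjunction characterization of Proposition~\ref{thm:connections}, so no genuine obstacle should arise.
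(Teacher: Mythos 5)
Your proposal is correct and follows exactly the route the paper intends: the paper's Lemma~\ref{lem:semi-additive} defines $f+g = \codiagonal\circ(f\times g)\circ\diagonal$ and leaves the explicit computation in $\cat{Ltc}$ implicit, and your verification---deriving $\diagonal$ and $\codiagonal$ from the universal properties of the product and coproduct of \S\ref{sec:HAL}, checking the adjunction criterion of Proposition~\ref{thm:connections}, and composing adjoints in the correct (reversed for upper adjoints) order---is precisely the omitted bookkeeping, including the genuinely necessary observation that $f^{\sbt}, g^{\sbt}$ preserve the top element so that $f\times g$ acts coordinatewise on both adjoints.
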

\noindent We denote sums of connections by $f + g$ and $\sum_\alpha f_\alpha$ as appropriate.
% ---------------------------------------------------------------------------------------------------
\subsection{Grandis Cohomology}
\label{sec:grandis}
% ---------------------------------------------------------------------------------------------------
With this in place, one can examine the category of bounded cochain complexes $(C^\bullet, \coboundary)$ valued in $\cat{Ltc}$, denoted $\cat{Ch}_{+}(\cat{Ltc})$, and define a cohomology. This is implicit in the work of Grandis \cite{grandis2013homological}; we fill in certain details for clarity.

As with ordinary cohomology, define the \define{cocycles} and \define{coboundaries} of a cochain complex valued in $\cat{Ltc}$ by $Z^\bullet = \Ker \coboundary$ and $B^\bullet = \Nim \coboundary$ respectively. (For simplicity, we omit the grading superscript on the coboundary operator; in the remainder of this section, the reader should be careful when specifying to a particular grading).

\begin{lemma}
\label{thm:boundary-inclusion}
There is an order inclusion of coboundaries into cocycles.
\end{lemma}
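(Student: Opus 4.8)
The plan is to reduce the claimed inclusion $B^\bullet \subseteq Z^\bullet$, grading by grading, to a single inequality between the top elements of two principal downsets. Fixing a degree $k$ and writing $\coboundary$ for both $\coboundary: C^{k-1} \to C^k$ and $\coboundary: C^k \to C^{k+1}$ (as the excerpt's convention permits), I would first invoke the explicit descriptions from \S\ref{sec:HAL}: the cocycles are $Z^k = \Ker \coboundary = \downset{\coboundary^{\sbt}(0)}$ and the coboundaries are $B^k = \Nim \coboundary = \downset{\coboundary_{\sbt}(1)}$. Both are therefore principal downsets inside the lattice $C^k$, so the set inclusion $B^k \subseteq Z^k$ holds precisely when their generators satisfy $\coboundary_{\sbt}(1) \preceq \coboundary^{\sbt}(0)$.

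Next I would extract this inequality from the cochain-complex condition $\coboundary \circ \coboundary = 0$. Unpacking composition in $\cat{Ltc}$ via $g \circ f = (g_{\sbt} \circ f_{\sbt},\, f^{\sbt} \circ g^{\sbt})$, the lower adjoint of $\coboundary \circ \coboundary$ is $\coboundary_{\sbt} \circ \coboundary_{\sbt}$, while the lower adjoint $0_{\sbt}$ of the zero morphism sends every element to $0$. Evaluating at the top element $1$ then gives $\coboundary_{\sbt}(\coboundary_{\sbt}(1)) = 0$, i.e. $\coboundary_{\sbt}(1) \in \Ker \coboundary = Z^k$. Because $Z^k$ is a downset and $B^k$ is the principal downset generated by $\coboundary_{\sbt}(1)$, the inclusion $B^k \subseteq Z^k$ follows. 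The inclusion map is automatically order-preserving, since both subobjects carry the order induced from $C^k$, which is exactly the asserted order inclusion of coboundaries into cocycles.

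The computation is short, so the only place demanding care is the bookkeeping: ensuring the two occurrences of $\coboundary$ sit in adjacent gradings, and correctly reading off the lower-adjoint component of both the composite and the zero morphism under the $\cat{Ltc}$ conventions (in particular $0 = (0_{\sbt}, 0^{\sbt})$ with $0_{\sbt} \equiv 0$). A dual reading via the upper adjoints gives $\coboundary^{\sbt}(\coboundary^{\sbt}(0)) = 1$, so $\coboundary^{\sbt}(0) \in \Cok\coboundary = \upset{\coboundary_{\sbt}(1)}$, recovering the same inequality $\coboundary_{\sbt}(1) \preceq \coboundary^{\sbt}(0)$ through the connection's adjunction; this serves as a consistency check but is not needed for the argument.
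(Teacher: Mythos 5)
Your proposal is correct and is essentially the paper's own argument: both hinge on unpacking $\coboundary\circ\coboundary=0$ in $\cat{Ltc}$ to get $\coboundary_{\sbt}\coboundary_{\sbt}(1)=0$, combined with the explicit descriptions $B^\bullet=\downset{\coboundary_{\sbt}(1)}$ and $Z^\bullet=\Ker\coboundary$. The only cosmetic difference is that you verify membership of the generator $\coboundary_{\sbt}(1)$ in $Z^\bullet$ and invoke the downset structure, while the paper applies monotonicity of $\coboundary_{\sbt}$ to an arbitrary coboundary $x\preceq\coboundary_{\sbt}(1)$; the two finishes are interchangeable.
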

\begin{proof}
Suppose $x$ is a boundary, so that $x \preceq  \coboundary_{\sbt}(1)$ and $\coboundary_{\sbt}(x) \preceq \coboundary_{\sbt}  \coboundary_{\sbt}(1) = 0$ as $\coboundary\circ\coboundary = 0$.
\end{proof}

\begin{definition}
The \define{Grandis cohomology} of $(C^\bullet, \coboundary) \in \cat{Ch}_{+}(\cat{Ltc})$ is
\begin{equation}
\label{eq:ltc-cohomology}
  \GraH^\bullet(C^\bullet) = \Cok ( B^\bullet \hookrightarrow Z^\bullet) .
\end{equation}
\end{definition}

Grandis cohomology is best computed as an interval.

\begin{proposition}
\label{thm:interval}
$\GraH^\bullet$ is isomorphic to
\[
  \left[ \coboundary_{\sbt}(1),~\coboundary^{\sbt}(0) \right] .
\]
\end{proposition}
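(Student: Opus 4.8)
The plan is to unwind the definition $\GraH^\bullet = \Cok(B^\bullet \hookrightarrow Z^\bullet)$ and apply the explicit cokernel formula for $\cat{Ltc}$. First I would record the two objects involved as principal downsets: by the computations in \S\ref{sec:HAL}, $Z^\bullet = \Ker\coboundary = \downset{\coboundary^{\sbt}(0)}$ and $B^\bullet = \Nim\coboundary = \downset{\coboundary_{\sbt}(1)}$. Each is a quasi-sublattice of the relevant cochain lattice of the form $[0,a]$; in particular the top element of $B^\bullet$ is $\coboundary_{\sbt}(1)$ and the top element of $Z^\bullet$ is $\coboundary^{\sbt}(0)$, while both share the ambient bottom $0$. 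Lemma \ref{thm:boundary-inclusion} guarantees $B^\bullet \subseteq Z^\bullet$, equivalently $\coboundary_{\sbt}(1) \preceq \coboundary^{\sbt}(0)$, so the target interval is nonempty and well-formed.

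Next I would pin down the inclusion as a morphism of $\cat{Ltc}$. The connection $\iota = (\iota_{\sbt}, \iota^{\sbt}): B^\bullet \hookrightarrow Z^\bullet$ realizing the order inclusion has lower adjoint the literal set inclusion (joins in a downset agree with joins in $C^\bullet$, so it is join-preserving) and upper adjoint $\iota^{\sbt}(z) = z \meet \coboundary_{\sbt}(1)$, the largest element of $B^\bullet$ below $z$. The only datum I actually need is the image of the top of the domain: since $\iota_{\sbt}$ is the inclusion, $\iota_{\sbt}(1_{B^\bullet}) = \coboundary_{\sbt}(1)$, now viewed as an element of $Z^\bullet$.

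I would then apply the cokernel formula $\Cok f = \upset{f_{\sbt}(1)}$ to $f = \iota$, computing the upset inside the codomain $Z^\bullet$. This gives $\Cok\iota = \setof{z \in Z^\bullet : z \succeq \coboundary_{\sbt}(1)}$. Intersecting the membership condition $z \in Z^\bullet = \downset{\coboundary^{\sbt}(0)}$ with $z \succeq \coboundary_{\sbt}(1)$ yields precisely $\setof{z : \coboundary_{\sbt}(1) \preceq z \preceq \coboundary^{\sbt}(0)} = \left[\coboundary_{\sbt}(1),\, \coboundary^{\sbt}(0)\right]$, the claimed interval; up to the canonical identification of this cokernel object with a sub-poset of $Z^\bullet$, this is the asserted isomorphism.

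The step I expect to be the main obstacle, and the one deserving the most care, is the correct bookkeeping of the cokernel computation in $\cat{Ltc}$: the ``$1$'' in $\Cok f = \upset{f_{\sbt}(1)}$ must be the top of the \emph{domain} $B^\bullet$, namely $\coboundary_{\sbt}(1)$, and not the top $1$ of the ambient cochain lattice, and the upset must be formed within $Z^\bullet$ rather than within all of $C^\bullet$. Getting these two relative conventions right is exactly what forces the lower endpoint of the interval to be $\coboundary_{\sbt}(1)$ and the upper endpoint to be $\coboundary^{\sbt}(0)$, with Lemma \ref{thm:boundary-inclusion} ensuring the two remain compatible.
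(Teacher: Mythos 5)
Your proposal is correct and follows essentially the same route as the paper's own (two-sentence) proof: apply the cokernel formula $\Cok f = \upset{f_{\sbt}(1)}$ to the inclusion $B^\bullet \hookrightarrow Z^\bullet$ to get the lower bound $\coboundary_{\sbt}(1)$, and use membership in $Z^\bullet = \Ker\coboundary = \downset{\coboundary^{\sbt}(0)}$ for the upper bound. The extra bookkeeping you supply --- the explicit connection $\iota$ with upper adjoint $z \mapsto z \meet \coboundary_{\sbt}(1)$, and the care that the relevant ``$1$'' is the top of the domain $B^\bullet$ --- is exactly what the paper leaves implicit, and it is correct.
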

\begin{proof}
Let $x \in \Cok ( B^\bullet \hookrightarrow Z^\bullet)$ so that $x \succeq \coboundary_{\sbt}(1)$. Simultaneously, $x \preceq \coboundary^{\sbt}(0)$ as $x \in \Ker \coboundary^k$.
\end{proof}

% ---------------------------------------------------------------------------------------------------
\subsection{Passing from Vector Spaces to Subspace Lattices}
\label{sec:example}
% ---------------------------------------------------------------------------------------------------

The problem with defining a sheaf cohomology for sheaves valued in $\cat{Ltc}$ lies in the definition of the coboundary map: for ordinary sheaf cohomology of sheaves valued in $\cat{Vect}$, the abelian structure is vital to defining $\coboundary$.  However, if one begins with a sheaf valued in $\cat{Vect}$, then it is possible to pass to $\cat{Ltc}$ via a Grassmannian --- converting all vector space data to the lattice of subspaces. This is an interesting class of sheaves and, though not universal, does provide an arena in which to compare different sheaf cohomologies. 

Denote by $\cat{Gr}$ the \define{transfer functor}, $\cat{Gr}(\cdot): \cat{Vec} \rightarrow \cat{Mlc}$, which converts vector spaces and linear transformations to (modular) lattices of subspaces and exact connections, where, for a linear transformation $A$, $Gr(A)_{\sbt}$ is the image and $Gr(f)^{\sbt}$ is the inverse image. By abuse of notation, this extends to chain complexes in the appropriate categories as well. This transfer functor respects cohomology.

\begin{theorem}
\label{thm:grandis-cohom}
For $C^\bullet$ a cochain complex valued in $\cat{Vect}$, 
\begin{equation}
  \GraH^\bullet \left( \cat{Gr} (C^\bullet) \right) \cong \cat{Gr} \left( H^\bullet(C^\bullet)\right) .
\end{equation}
\end{theorem}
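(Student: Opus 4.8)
The claim is that the transfer functor $\cat{Gr}$ commutes with cohomology, i.e. $\GraH^\bullet(\cat{Gr}(C^\bullet)) \cong \cat{Gr}(H^\bullet(C^\bullet))$. By Proposition \ref{thm:interval}, the left-hand side is computable as an interval in the lattice $\cat{Gr}(C^k)$, namely $[\coboundary_{\sbt}(1), \coboundary^{\sbt}(0)]$, where now the coboundary is the $\cat{Ltc}$-morphism $\cat{Gr}(\delta)$ applied at the appropriate gradings. The plan is to unwind both sides into concrete statements about subspaces of the vector spaces $C^k$ and check they name the same subquotient. The right-hand side is simply the subspace lattice of the vector-space cohomology $H^k = \Ker\delta^k / \Im\delta^{k-1}$.

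**Key steps.** First, I would identify the two endpoints of the interval explicitly in terms of the linear maps. Under $\cat{Gr}$, the lower adjoint of a linear map $A$ is image and the upper adjoint is preimage. So for the outgoing coboundary $\delta^k$, the element $\coboundary_{\sbt}(1)$ is $\cat{Gr}(\delta^{k-1})_{\sbt}(C^{k-1}) = \Im\,\delta^{k-1}$ --- here I must be careful that the ``$1$'' and the lower adjoint in Proposition \ref{thm:interval} refer to the \emph{incoming} coboundary at grading $k$, so $\coboundary_{\sbt}(1)$ is the image of the previous coboundary, namely $B^k = \Im\,\delta^{k-1}$. Second, $\coboundary^{\sbt}(0)$ for the outgoing map $\delta^k$ is the preimage of the zero subspace, i.e.\ $\Ker\,\delta^k = Z^k$. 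Thus the interval $[\coboundary_{\sbt}(1), \coboundary^{\sbt}(0)]$ is exactly $[\Im\,\delta^{k-1}, \Ker\,\delta^k]$, the poset of subspaces $W$ with $\Im\,\delta^{k-1} \subseteq W \subseteq \Ker\,\delta^k$.

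**Completing the isomorphism.** The remaining step is a lattice isomorphism between this interval and $\cat{Gr}(H^k) = \cat{Gr}(\Ker\delta^k / \Im\delta^{k-1})$, the subspace lattice of the quotient. The correspondence theorem for vector spaces gives an order-isomorphism between subspaces of $\Ker\delta^k$ containing $\Im\delta^{k-1}$ and subspaces of the quotient $\Ker\delta^k/\Im\delta^{k-1}$, sending $W \mapsto W/\Im\delta^{k-1}$. I would verify this is a morphism in the relevant category of lattices --- it preserves meets (intersections) and joins (subspace sums) and is a bijection, hence an isomorphism in $\cat{Mlc}$. Since $\cat{Gr}$ lands in $\cat{Mlc}$ and intervals of modular lattices are modular, both sides genuinely live in the correct category, so the comparison is an isomorphism of modular lattices.

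**Anticipated obstacle.** The main subtlety is bookkeeping the adjoint directions across the grading shift: Proposition \ref{thm:interval} is stated for a single coboundary operator, but cohomology at grading $k$ involves \emph{two} maps, the incoming $\delta^{k-1}$ and the outgoing $\delta^k$, and one must confirm that $\Nim\coboundary$ and $\Ker\coboundary$ in the definition of Grandis cohomology correctly pick out $\Im\delta^{k-1}$ and $\Ker\delta^k$ respectively under $\cat{Gr}$. I would want to check carefully that $\cat{Gr}(\delta)_{\sbt}(1) = \Im$ and $\cat{Gr}(\delta)^{\sbt}(0) = \Ker$ match the formulas for $\Nim$ and $\Ker$ in $\cat{Ltc}$ from \S\ref{sec:HAL}, and that $\cat{Gr}$ being a functor sends the vector-space coboundary condition $\delta\circ\delta = 0$ to the $\cat{Ltc}$ condition $\coboundary\circ\coboundary=0$, justifying Lemma \ref{thm:boundary-inclusion}. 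Once those identifications are pinned down, the isomorphism is the classical correspondence theorem and the argument is essentially formal.
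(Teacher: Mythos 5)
Your proposal is correct and follows essentially the same route as the paper's proof: compute $\GraH^k(\cat{Gr}(C^\bullet))$ via Proposition \ref{thm:interval}, identify the interval endpoints as $\Im\coboundary^{k-1}$ and $\Ker\coboundary^{k}$ using the image/preimage description of $\cat{Gr}$ on morphisms, and conclude with the Fourth Isomorphism (correspondence) Theorem. Your explicit bookkeeping of the grading shift (incoming versus outgoing coboundary) and the check that $\cat{Gr}$ preserves $\coboundary\circ\coboundary=0$ are details the paper glosses over, but they do not change the argument.
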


\begin{proof}
That $\cat{Gr}\left( \cdot \right)$ is a functor is left as an exercise. It is clear that a connection induced by the coboundary maps in $C^\bullet$ is (1) exact and (2) a coboundary map in $\cat{Gr}(C^\bullet)$. By Proposition \ref{thm:interval},
\begin{align*}
\GraH^k \left( \cat{Gr} (C^\bullet) \right)	
											& \cong \left[ \cat{Gr}(\coboundary)_{\sbt} (1), \cat{Gr} (\coboundary)^{\sbt} (0) \right] \\
											& = \left[ \Im \coboundary, \Ker \coboundary \right] .
\end{align*}
Then, by the Fourth Isomorphism Theorem \cite{dummit2004abstract}[p.~394],
\[
\left[ \Im \coboundary, \Ker \coboundary \right] \cong \cat{Gr} \left( \Ker \coboundary / \Im \coboundary \right) .
\]
\end{proof}

In the case of a sheaf $\cat{F}$ of vector spaces over $\Space{X}$, we can pass to $\mathcal{F}=\cat{Gr}(\cat{F})$, the induced sheaf taking values in $\cat{Mlc}$ via subspaces. 
\begin{corollary}
\label{thm:grandis-sheaf}
For $\mathcal{F}=\cat{Gr}(\cat{F})$ a sheaf of lattices induced by a sheaf of vector spaces, $\GraH^\bullet\left(\Space{X}; \mathcal{F} \right) \cong \cat{Gr}\left(H^\bullet(\Space{X}; \cat{F})\right)$.
\end{corollary}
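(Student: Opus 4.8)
The plan is to derive this as an immediate specialization of Theorem~\ref{thm:grandis-cohom}, with all the genuine content hidden in correctly identifying the cochain complex that computes $\GraH^\bullet(\Space{X};\mathcal{F})$. First I would take $C^\bullet = C^\bullet(\Space{X}; \cat{F})$ to be the cellular cochain complex of the vector-space sheaf $\cat{F}$, with $C^k(\Space{X};\cat{F}) = \prod_{\dim\sigma=k}\cat{F}_\sigma$ and coboundary $\delta$ as in \eqref{eq:abelian-coboundary}. By the definition of sheaf cohomology for $\cat{Vect}$-valued sheaves, $H^k(C^\bullet(\Space{X};\cat{F})) = H^k(\Space{X};\cat{F})$, so this complex is the correct input on the right-hand side.

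Next I would pin down the identification of the two complexes in play. Applying the transfer functor $\cat{Gr}$ degreewise to $(C^\bullet(\Space{X};\cat{F}),\delta)$ --- using the extension of $\cat{Gr}$ to chain complexes noted above --- yields a bounded cochain complex $\cat{Gr}(C^\bullet(\Space{X};\cat{F}))$ in $\cat{Ch}_{+}(\cat{Mlc}) \subset \cat{Ch}_{+}(\cat{Ltc})$. I would argue that this is by construction the cochain complex underlying the induced $\cat{Mlc}$-valued sheaf $\mathcal{F}=\cat{Gr}(\cat{F})$ --- indeed, since a coboundary for a general $\cat{Ltc}$-valued sheaf is problematic, $\GraH^\bullet(\Space{X};\mathcal{F})$ is \emph{defined} through $\cat{Gr}(\delta)$ via \eqref{eq:ltc-cohomology}. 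With this identification, Theorem~\ref{thm:grandis-cohom} immediately gives
\[
  \GraH^\bullet(\Space{X};\mathcal{F}) = \GraH^\bullet\big(\cat{Gr}(C^\bullet(\Space{X};\cat{F}))\big) \cong \cat{Gr}\big(H^\bullet(C^\bullet(\Space{X};\cat{F}))\big) = \cat{Gr}\big(H^\bullet(\Space{X};\cat{F})\big),
\]
which is the desired isomorphism.

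The main obstacle is precisely the identification asserted in the previous step, and this is where I would spend the most care. One must check that forming the cochain complex commutes with $\cat{Gr}$: that $\cat{Gr}(\delta)$, the connection whose lower adjoint is the image map of the linear coboundary $\delta$ and whose upper adjoint is the preimage map, is a legitimate $\cat{Mlc}$-coboundary (exact, squaring to zero), which is exactly point (1)--(2) established inside the proof of Theorem~\ref{thm:grandis-cohom}. Two features deserve comment. First, the incidence signs $[\sigma:\tau]=\pm 1$ of \eqref{eq:abelian-coboundary} are invisible to $\cat{Gr}$: multiplication by $-1$ is a linear isomorphism, so its image and preimage maps are identities, and the abelian-\emph{group} sum in $\cat{Vect}$ collapses to the abelian-\emph{monoid} sum (join of images, meet of preimages) of Corollary~\ref{thm:semi-additive}, so $\cat{Gr}(\delta)$ is genuinely sign-free. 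Second --- the subtle point --- $\cat{Gr}$ is applied to the \emph{single} linear map $\delta$ on the product $\prod_{\dim\sigma=k}\cat{F}_\sigma$, not coordinatewise; the lattice $\cat{Gr}\big(\prod_\sigma \cat{F}_\sigma\big)$ of all subspaces of the product properly contains (in general) the product $\prod_\sigma \cat{Gr}(\cat{F}_\sigma)$ of subspace lattices, and it is the former that must be used so that the cocycle and coboundary objects record the genuine linear $\Ker\delta$ and $\Im\delta$ rather than their coordinatewise (box) approximations. Making this choice explicit is exactly what legitimizes the one-line reduction to Theorem~\ref{thm:grandis-cohom}.
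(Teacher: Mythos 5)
Your core argument is the paper's own: the corollary carries no separate proof precisely because it is the specialization of Theorem~\ref{thm:grandis-cohom} to the cellular cochain complex $(C^\bullet(\Space{X};\cat{F}),\coboundary)$, with $\GraH^\bullet(\Space{X};\mathcal{F})$ interpreted as the Grandis cohomology of $\cat{Gr}\bigl(C^\bullet(\Space{X};\cat{F}),\coboundary\bigr)$. Your ``subtle point'' is also the right one and agrees with the paper's convention in \S\ref{sec:HC} (``$\cat{Gr}(C^\bullet,\coboundary)$ its sheaf cochain complex''): the cochain lattice must be $\cat{Gr}\bigl(\prod_{\dim\sigma=k}\cat{F}_\sigma\bigr)$, not the product $\prod_{\dim\sigma=k}\cat{Gr}(\cat{F}_\sigma)$ --- already in degree zero the cocycle object $\Ker\coboundary=\Gamma(\Space{X};\cat{F})$ is generally a ``diagonal'' subspace that the box lattice cannot see.

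One comment in your proposal is false, however: the claim that the incidence signs are invisible because $\cat{Gr}$ collapses the abelian-group sum into the semi-additive monoid sum. It does not. For $f=\id_{\R}$ and $g=-\id_{\R}$ one has $\cat{Gr}(f+g)=\cat{Gr}(0)$, the zero connection, while $\cat{Gr}(f)+\cat{Gr}(g)=\id$. Concretely for coboundaries: take the constant sheaf $\R$ on a single edge $u\fc e\fcop v$, let $\coboundary(x_u,x_v)=x_v-x_u$ (an orientation) and $\coboundary'(x_u,x_v)=x_u+x_v$ (sign-free); on the diagonal $D=\setof{(x,x)}\in\cat{Gr}(\R^2)$ one gets $\cat{Gr}(\coboundary)_{\sbt}(D)=0$ while $\cat{Gr}(\coboundary')_{\sbt}(D)=\R$, so $\cat{Gr}(\coboundary)\neq\cat{Gr}(\coboundary')$ and $\cat{Gr}(\coboundary)$ is not sign-free as a connection. (Had your collapse held, $\cat{Gr}(\coboundary)$ would essentially be the join-of-images pseudo-coboundary $\qcoboundary$ of \S\ref{sec:HC}, which the paper notes fails to square to zero.) The slip is harmless to the corollary: Theorem~\ref{thm:grandis-cohom} is applied to the complex built from one fixed orientation, and independence of that choice holds because different orientations yield isomorphic complexes in $\cat{Vect}$ (via diagonal $\pm 1$ maps), and $\cat{Gr}$, being a functor, carries that isomorphism to an isomorphism of Grandis cohomologies. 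The only true sign-blindness is the weaker statement $\cat{Gr}(-f)=\cat{Gr}(f)$ for a single linear map.
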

For such sheaves, the relationship between the Tarski-based and Grandis-based cohomologies is as follows:
\begin{proposition}
For sheaves of the form $\mathcal{F} = \cat{Gr}(\cat{F})$, the zeroth Grandis cohomology is a quasi-sublattice of the zeroth Tarski cohomology.
\end{proposition}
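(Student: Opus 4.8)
The plan is to compute both cohomologies in degree zero, produce a canonical comparison map between them, and verify that it embeds one as a quasi-sublattice of the other.

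First I would pin down the two objects. On the Tarski side, by definition and Theorem~\ref{thm:main}, $\TarH^0(\Space{X};\mathcal{F}) = \fixedpoint(\id\meet\Laplacian_0) = \Gamma(\Space{X};\sheaf{F})$, the lattice of tuples $(x_v)_v$ of subspaces whose images agree across edges. On the Grandis side, since $\mathcal{F} = \cat{Gr}(\cat{F})$, Corollary~\ref{thm:grandis-sheaf} gives $\GraH^0(\Space{X};\mathcal{F}) \cong \cat{Gr}\big(H^0(\Space{X};\cat{F})\big) = \cat{Gr}\big(\Gamma(\Space{X};\cat{F})\big)$; equivalently, applying Proposition~\ref{thm:interval} in degree zero (where the incoming coboundary vanishes, so $\coboundary_{\sbt}(1) = 0$) identifies it with $\downset{\Ker\coboundary^0} = \downset{\Gamma(\Space{X};\cat{F})}$, the lattice of linear subspaces $W$ of the global-section space $\Gamma(\Space{X};\cat{F}) \subseteq \prod_v \cat{F}_v$.

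Next I would build the comparison map $\Psi\colon \cat{Gr}\big(\Gamma(\Space{X};\cat{F})\big) \to \Gamma(\Space{X};\sheaf{F})$ sending a subspace $W$ to the tuple $(\pi_v W)_v$ of its vertex projections. The first checkpoint is well-definedness: for an edge $v \fc e \fcop w$, the subspaces $\resmaplower{F}{v}{e}(\pi_v W)$ and $\resmaplower{F}{w}{e}(\pi_w W)$ are the images of $W$ under the two edge-restrictions, and they coincide because every vector of $W \subseteq \Gamma(\Space{X};\cat{F})$ already satisfies the global-section equation; hence $\Psi(W) \in \Gamma(\Space{X};\sheaf{F})$. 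Join-preservation is then easy: by Corollary~\ref{cor:Post}, $\Gamma(\Space{X};\sheaf{F}) = \mathrm{Post}(\Laplacian)$, and post-fixed points of the order-preserving $\Laplacian$ are closed under the ambient coordinatewise subspace sum, so joins in $\TarH^0$ are computed ambiently; since $\pi_v$ commutes with subspace sums, $\Psi$ preserves joins.

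The main obstacle is the pair of remaining properties: injectivity and meet-preservation. These are delicate because meets in the quasi-sublattice $\Gamma(\Space{X};\sheaf{F})$ are Tarski meets and need not coincide with coordinatewise intersection, and because the bare projection $W \mapsto (\pi_v W)_v$ can collapse distinct subspaces of $\Gamma(\Space{X};\cat{F})$ sharing the same vertex shadows. Here I would exploit the extra rigidity carried by $\mathcal{F} = \cat{Gr}(\cat{F})$: its connections are exact (they lie in $\cat{Mlc}$), so each $\resmaplower{F}{v}{e}$ is the honest image map and the global-section constraints tie the vertex data together tightly. Concretely, I would isolate the condition that some vertex projection $\pi_v\colon \Gamma(\Space{X};\cat{F}) \to \cat{F}_v$ be injective --- automatic, for instance, when $\Space{X}$ is connected with monic restrictions --- under which $W = (\pi_v|_{\Gamma})^{-1}(\pi_v W)$ recovers $W$, giving injectivity of $\Psi$ and, since an injective linear map preserves intersections of subspaces, reducing the meet in $\TarH^0$ to intersection in $\cat{Gr}\big(\Gamma(\Space{X};\cat{F})\big)$ and hence yielding meet-preservation. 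Assembling the three properties exhibits $\Psi$ as a monomorphism of lattices onto its image, realizing $\GraH^0$ as a quasi-sublattice of $\TarH^0$, as claimed.
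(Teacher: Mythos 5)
Your first two paragraphs are, in essence, the paper's entire proof: identify $\TarH^0(\Space{X};\mathcal{F})$ with $\Gamma(\Space{X};\sheaf{F})$ via Theorem~\ref{thm:main}, identify $\GraH^0(\Space{X};\mathcal{F})$ with the lattice of subspaces $W \subseteq \Gamma(\Space{X};\cat{F})$ via Corollary~\ref{thm:grandis-sheaf}, and check that the tuple of vertex projections $(\pi_v W)_v$ satisfies the matching condition \eqref{eq:global-section} because every individual vector of $W$ does. The paper stops exactly there: it silently identifies $W$ with $(\pi_v W)_v$ and reads ``is a quasi-sublattice of'' as ``the canonical image of $\GraH^0$ in $C^0(\Space{X};\mathcal{F})$ lands in $\TarH^0$.'' Your join-preservation argument is a correct addition that the paper does not make: $\mathrm{Post}(\Laplacian)$ is closed under ambient joins (if $\Laplacian\mathbf{x}\succeq\mathbf{x}$ and $\Laplacian\mathbf{y}\succeq\mathbf{y}$, then $\Laplacian(\mathbf{x}\join\mathbf{y})\succeq\Laplacian\mathbf{x}\join\Laplacian\mathbf{y}\succeq\mathbf{x}\join\mathbf{y}$), so joins in $\TarH^0$ are coordinatewise subspace sums, which commute with each $\pi_v$; this is also precisely what makes the image of $\Psi$ a lattice in its own right, i.e., a genuine quasi-sublattice.

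Where you go beyond the paper --- demanding that $\Psi$ be injective and meet-preserving --- you have identified a real defect, but your repair does not prove the stated proposition: it proves a conditional variant under a hypothesis (injectivity of some $\pi_v$ on $\Gamma(\Space{X};\cat{F})$) that is absent from the statement and fails in general. Worse, no repair is possible under the strong reading you are pursuing: take $\Space{X}$ to be two isolated vertices and $\cat{F}$ the constant sheaf $\R$; then $\GraH^0 \cong \cat{Gr}(\R^2)$ is infinite, while $\TarH^0 = C^0(\Space{X};\mathcal{F}) \cong \cat{Gr}(\R)\times\cat{Gr}(\R)$ has four elements, so no injective map $\GraH^0 \to \TarH^0$ of any kind exists. (For a connected example of non-injectivity of $\Psi$ itself: a single edge $u \fc e \fcop v$ with $\cat{F}_u = \cat{F}_v = \R^2$ and $\cat{F}_e = 0$, where the graphs of $+\id$ and $-\id$ are distinct subspaces of $\Gamma(\Space{X};\cat{F})$ with identical vertex projections.) So the proposition is true only in the weak sense the paper's proof establishes --- the canonical image of $\GraH^0$ under $\Psi$ is a quasi-sublattice of $\TarH^0$ --- and that much is already proved, unconditionally, by your first two paragraphs together with your join argument; your third paragraph should be replaced by the observation that injectivity is not claimed (and cannot hold), rather than by hypotheses that force it.
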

\begin{proof}
If $\mathbf{x}$ is a subspace in $\GraH^0(\Space{X}; \mathcal{F})$, then by Corollary \ref{thm:grandis-sheaf}, $\mathbf{x}$ is a subspace of $\Gamma(\Space{X}; \cat{F}) = H^0(\Space{X}; \cat{F})$. In turn, this means that the images of $\mathbf{x}$ under the restriction maps $\cat{F}_{v \fc e}$, $\cat{F}_{w \fc e}$ will coincide for every $v, w \in \boundary e$ so that $\mathbf{x} \in \Gamma(\Space{X}; \sheaf{F}) = \TarH^0(\Space{X}; \mathcal{F})$.
\end{proof}

The inclusion is strict. Consider a twisted sheaf $\cat{F}$ of 1-dimensional stalks as in Fig.\ \ref{fig:counter-eg}. Then, the lattice-valued sheaf $\mathcal{F}=\cat{Gr}(\cat{F})$ is constant: the stalks are the lattice $\lattice{L} = \setof{0, 1}$, and the restriction maps are all the identity. Hence, $\TarH^0(\Space{X}; \mathcal{F}) \cong \lattice{L}$. However (as the reader may calculate), $H^0(\Space{X}; \cat{F}) \cong 0$, as there are no nonzero global sections. By Theorem \ref{thm:grandis-cohom}, $\GraH^0(\Space{X}; \mathcal{F}) \cong \lattice{0}$.

\begin{figure}[h]
%\begin{tikzpicture}[baseline= (a).base]
%\node[scale=0.8] (a) at (0,0){
%\begin{tikzcd}
%				\R \ar[rd,swap, "-\id"] \ar[dd,"\id"]	&		&		\\			
%		    		&	\R	&		\\
%		  	 	\R	&		&	 \R \ar[ul,"\id"] \ar[ld,"\id"]\\
%		     		&	\R	&		\\ 
%		    	\R \ar[ur,swap,"\id"] \ar[uu,swap,"\id"]	&		&		\\  	
%\end{tikzcd}
%};
%\end{tikzpicture}
\includegraphics[width=\textwidth]{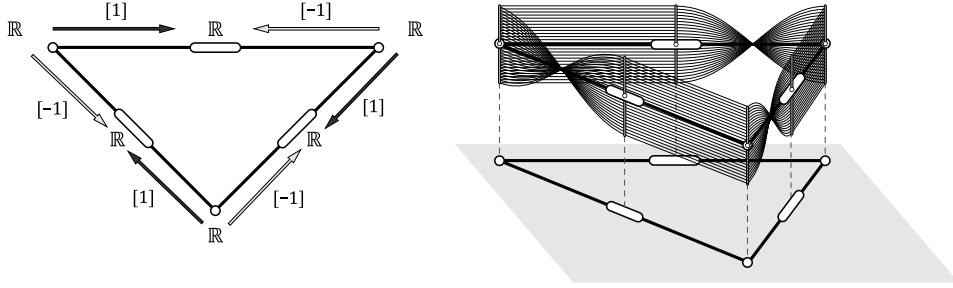}
\caption{A twisted sheaf (left) demonstrates strict inclusion $\TarH^0 \nsubseteq \GraH^0$ due to the absence of a nonzero section (right).}
\label{fig:counter-eg}
\end{figure}

% ---------------------------------------------------------------------------------------------------
\subsection{A Hodge Complex}
\label{sec:HC}
% ---------------------------------------------------------------------------------------------------

Sheaves of lattices do not generically factor through the transfer functor. For such sheaves, can their cohomology be read off of a cochain complex via Grandis cohomology? This is unclear if not unlikely.
We therefore pursue a philosophically disparate lattice sheaf cohomology mimicking the Hodge theory for vector-valued sheaves.
We define a pseudo-coboundary connection $\qcoboundary : C^\bullet(\Space{X}; \mathcal{F}) \rightarrow C^{\bullet+1}(\Space{X}; \mathcal{F})$ given by a sum of projections composed with restriction maps: sums naturally arise from the semi-additive structure on $\cat{Ltc}$; projections onto a stalk ($\mathcal{F}_\sigma$) over a $k$-cell ($\sigma \in \Space{X}_k$) are connections,
\[
  \pi_{\sigma}: C^k(\Space{X}; \mathcal{F}) \rightarrow \mathcal{F}_{\sigma}:~\quad (\pi_{\sigma})_{\sbt}(\mathbf{x}) = x_\sigma,~\quad 
(\pi_{\sigma})^{\sbt}(x_\sigma) = (1, \dots,1, x_\sigma, 1, \dots, 1).
\]

\begin{definition}
Let $\mathcal{F}$ be a sheaf valued in $\cat{Ltc}$ with cochains $C^\bullet(\Space{X}; \mathcal{F})$ per usual. The \define{pseudo-cochain connections} of $\mathcal{F}$ are the sequence of connections
\[
  \qcoboundary : C^\bullet(\Space{X}; \mathcal{F}) \rightarrow C^{\bullet+1}(\Space{X}; \mathcal{F})
\] 
given by
\begin{equation}
\label{eq:qcoboundary}
  (\qcoboundary \mathbf{x})_\tau 
  = 
  \left( 
    \sum_{\sigma \fc \tau} \resmap{F}{\sigma}{\tau} \circ \pi_\sigma 
  \right) 
  \left(  
    \mathbf{x} 
  \right) .
\end{equation}
The \define{pseudo-cochain comlpex} of $\mathcal{F}$ is the lattice of cochains $C^\bullet(\Space{X}; \mathcal{F})$ together with the pseudo-cochain connections of $\mathcal{F}$.
\end{definition}

The connection $\qcoboundary$ is not a true coboundary as $\qcoboundary\circ\qcoboundary\neq 0$ in general. As a consequence, a lattice of coboundaries is are not a quasi-sublattice of cycles (Lemma \ref{thm:boundary-inclusion}) and Grandis cohomology is not well-defined. However, as connections come with adjoints built-in, a Hodge-like Laplacian may still be defined.

\begin{definition}
\label{def:hodge-ltc}
For $(C^\bullet, \coboundary)$ a (bounded) cochain complex in $\cat{Ltc}$, the \define{Hodge Laplacians} are the pair $\Laplacian^{+}, \Laplacian^{-}: C^\bullet \rightarrow C^\bullet$ of order-preserving degree-zero maps
\begin{equation}
\label{eq:updown}
  \Laplacian^{+}_k = \coboundary^{\sbt} \coboundary_{\sbt} 
  \quad : \quad
  \Laplacian^{-}_k = \coboundary_{\sbt} \coboundary^{\sbt} . 
\end{equation}
As with the cellular (combinatorial) Hodge Laplacian, one calls $\Laplacian^+$ the \define{up-Laplacian} and $\Laplacian^-$ the \define{down-Laplacian}.
\end{definition}

For example, the Hodge Laplacians of sheaves factoring through the transfer functor are readily calculated:
\begin{proposition}
For $\mathcal{F} = \cat{Gr}(\cat{F})$ and $\cat{Gr}(C^\bullet, \coboundary)$ its sheaf cochain complex, the Hodge Laplacians are computed via
\[
  \Laplacian^{+} \mathbf{x} = \mathbf{x} \join \Ker \coboundary
  \quad : \quad
  \Laplacian^{-} \mathbf{x} = \mathbf{x} \meet \Im \coboundary .
\]
\end{proposition}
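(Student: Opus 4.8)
The plan is to unwind both Laplacians through the transfer functor and reduce each to an elementary subspace identity. Recall that $\cat{Gr}$ sends the coboundary linear map $\coboundary$ (in the appropriate grading) to the connection whose lower adjoint $\coboundary_{\sbt}$ is the direct image and whose upper adjoint $\coboundary^{\sbt}$ is the inverse image; a cochain $\mathbf{x} \in \cat{Gr}(C^k)$ is thus a single subspace of the product vector space $C^k$. Substituting into Definition \ref{def:hodge-ltc}, the up-Laplacian becomes $\Laplacian^{+}\mathbf{x} = \coboundary^{\sbt}\coboundary_{\sbt}(\mathbf{x}) = \coboundary^{-1}(\coboundary(\mathbf{x}))$ and the down-Laplacian becomes $\Laplacian^{-}\mathbf{x} = \coboundary_{\sbt}\coboundary^{\sbt}(\mathbf{x}) = \coboundary(\coboundary^{-1}(\mathbf{x}))$.

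The work then reduces to two standard facts of linear algebra. First, for any subspace $\mathbf{x}$ of the domain, $\coboundary^{-1}(\coboundary(\mathbf{x})) = \mathbf{x} + \Ker\coboundary$: the inclusion $\supseteq$ is immediate, and for $\subseteq$ one writes $v$ with $\coboundary(v) = \coboundary(u)$ for some $u \in \mathbf{x}$, so that $v - u \in \Ker\coboundary$ and $v \in \mathbf{x} + \Ker\coboundary$. Second, for any subspace $\mathbf{x}$ of the codomain, $\coboundary(\coboundary^{-1}(\mathbf{x})) = \mathbf{x} \cap \Im\coboundary$: the image of the preimage lands in $\mathbf{x}$ and in $\Im\coboundary$, giving $\subseteq$, while any $w \in \mathbf{x} \cap \Im\coboundary$ equals $\coboundary(v)$ for some $v \in \coboundary^{-1}(\mathbf{x})$, giving $\supseteq$. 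Since join in $\cat{Gr}$ is the subspace sum and meet is intersection, these identities yield exactly $\Laplacian^{+}\mathbf{x} = \mathbf{x}\join\Ker\coboundary$ and $\Laplacian^{-}\mathbf{x} = \mathbf{x}\meet\Im\coboundary$.

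There is no serious obstacle here; the only points requiring care are bookkeeping. One must read $\Ker\coboundary$ and $\Im\coboundary$ as single lattice elements --- the kernel and image subspaces --- consistently with the conventions fixed for $\cat{Gr}$, and one must respect the grading: in degree $k$ the up-Laplacian uses the outgoing coboundary $\coboundary\colon C^k \to C^{k+1}$ (so $\Ker\coboundary$ is its kernel), whereas the down-Laplacian uses the incoming coboundary $\coboundary\colon C^{k-1}\to C^k$ (so $\Im\coboundary$ is its image). With these identifications in place the two computations are immediate.
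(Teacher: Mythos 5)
Your proof is correct, and at bottom it travels the same road as the paper's: both reduce the claim to the identities $\coboundary^{\sbt}\coboundary_{\sbt}(\mathbf{x}) = \mathbf{x} \join \coboundary^{\sbt}(0)$ and $\coboundary_{\sbt}\coboundary^{\sbt}(\mathbf{x}) = \mathbf{x} \meet \coboundary_{\sbt}(1)$, together with the identifications $\coboundary^{\sbt}(0) = \Ker\coboundary$ and $\coboundary_{\sbt}(1) = \Im\coboundary$. The difference lies in how those identities are obtained. The paper simply cites that $\cat{Gr}(\coboundary)$ is an \emph{exact} connection --- left and right exactness being, by definition, precisely these two identities, and exactness being built into the description of the transfer functor $\cat{Gr}\colon \cat{Vec} \to \cat{Mlc}$ --- and then reads off $\coboundary^{\sbt}(0)$ as the kernel and $\coboundary_{\sbt}(1)$ as the image. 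You instead prove the two identities from scratch as subspace computations, $\coboundary^{-1}(\coboundary(\mathbf{x})) = \mathbf{x} + \Ker\coboundary$ and $\coboundary(\coboundary^{-1}(\mathbf{x})) = \mathbf{x} \cap \Im\coboundary$; in other words, your ``two standard facts of linear algebra'' are exactly a verification that the image/preimage connection on subspace lattices is exact, a fact the paper asserts without proof when introducing $\cat{Gr}$. What your version buys is self-containedness: it does not lean on that unproved assertion, and it makes visible why exactness holds for subspace lattices. What the paper's version buys is brevity and scope: phrased purely in terms of exact connections, the same one-line argument applies to any exact connection in $\cat{Mlc}$, not only those arising from linear maps. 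Your bookkeeping on the grading (the up-Laplacian uses the outgoing coboundary $C^k \to C^{k+1}$, the down-Laplacian the incoming one $C^{k-1} \to C^k$) is also consistent with the paper's convention.
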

\begin{proof}
As $\cat{Gr}(\coboundary)$ is an exact connection, 
\[
  \coboundary^{\sbt} \coboundary_{\sbt} (\mathbf{x}) = \mathbf{x} \join \coboundary^{\sbt}(0)
  \quad : \quad
  \coboundary_{\sbt} \coboundary^{\sbt} (\mathbf{x}) = \mathbf{x} \meet \coboundary_{\sbt}(1) .
\]
Here, $\coboundary^{\sbt} 0$ is the preimage of the zero vector space (the kernel) and $\coboundary_{\sbt} 1$ is the direct image of the entire space (the image).
\end{proof}

More generally, we may compute the Hodge Laplacians of the pseudo-cochain complex for an arbitrary lattice-valued sheaf. By slight abuse of notation, substitute the pseudo-coboundary $\qcoboundary$ of \eqref{eq:qcoboundary} into the definition of the up- and down-Laplacians of \eqref{eq:updown}.
\begin{proposition}
Let $\mathcal{F}$ be a sheaf valued in $\cat{Ltc}$ and $(C^\bullet, \qcoboundary)$ its pseudo-cochain complex. Then, the Hodge Laplacians ($\Laplacian^{+} = \qcoboundary^{\sbt} \qcoboundary_{\sbt}$, $\Laplacian^{-} = \qcoboundary_{\sbt} \qcoboundary^{\sbt}$) are computed as
\begin{equation}
\label{eq:hodge-up}
  (\Laplacian^{+} \mathbf{x})_\sigma 
  = 
  \bigmeet_{\tau \in \coboundary \sigma} 
    \resmapupper{F}{\sigma}{\tau} 
    \left( 
      \bigjoin_{\sigma' \in \boundary \tau} \resmaplower{F}{\sigma'}{\tau}(x_{\sigma'})
    \right) ,
\end{equation}
\begin{equation}
\label{eq:hodege-down}
  (\Laplacian^{-} \mathbf{x})_\sigma 
  = 
  \bigjoin_{\rho \in \boundary \sigma} 
  \resmaplower{F}{\rho}{\sigma} 
  \left( 
    \bigmeet_{\sigma' \in \coboundary \rho} \resmapupper{F}{\rho}{\sigma'}(x_{\sigma'}) 
  \right) .
\end{equation}
\end{proposition}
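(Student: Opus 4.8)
The plan is to compute the lower and upper adjoints of $\qcoboundary$ separately, read off their components stalk-by-stalk, and then compose. The whole argument is bookkeeping with two facts already established: the composition rule $g\circ f = (g_{\sbt}\circ f_{\sbt}, f^{\sbt}\circ g^{\sbt})$ for connections, and the description of sums of connections in Corollary \ref{thm:semi-additive}, namely $(f+g)_{\sbt} = f_{\sbt}\join g_{\sbt}$ and $(f+g)^{\sbt} = f^{\sbt}\meet g^{\sbt}$. I would first rewrite \eqref{eq:qcoboundary} as a single sum of connections $C^k\to C^{k+1}$, $\qcoboundary = \sum_{\sigma\fc\tau}\iota_\tau\circ\resmap{F}{\sigma}{\tau}\circ\pi_\sigma$, where $\iota_\tau$ is the stalk inclusion dual to the projection $\pi_\tau$.

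Computing the lower adjoint is immediate: the summand $\iota_\tau\circ\resmap{F}{\sigma}{\tau}\circ\pi_\sigma$ has lower part sending $\mathbf{x}$ to the cochain carrying $\resmaplower{F}{\sigma}{\tau}(x_\sigma)$ in slot $\tau$ and $0$ elsewhere; since lower adjoints of a sum join together and joins in $C^{k+1}$ are coordinate-wise, the off-slot $0$'s drop out and $(\qcoboundary_{\sbt}\mathbf{x})_\tau = \bigjoin_{\sigma\fc\tau}\resmaplower{F}{\sigma}{\tau}(x_\sigma)$. The upper adjoint is the dual and delicate computation: the same summand has upper part sending $\mathbf{z}$ to $(\pi_\sigma)^{\sbt}(\resmapupper{F}{\sigma}{\tau}(z_\tau))$, which by the definition of $(\pi_\sigma)^{\sbt}$ carries $\resmapupper{F}{\sigma}{\tau}(z_\tau)$ in slot $\sigma$ and $1$ in every other slot. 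Upper adjoints of a sum meet together, meets in $C^k$ are coordinate-wise, and the off-slot $1$'s are absorbed, so the double meet over all incident pairs collapses to $(\qcoboundary^{\sbt}\mathbf{z})_\sigma = \bigmeet_{\tau\in\coboundary\sigma}\resmapupper{F}{\sigma}{\tau}(z_\tau)$.

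With the two component formulas in hand, the Laplacians follow by substitution, taking care to use the correct grading of $\qcoboundary$: at degree $k$ the up-Laplacian uses $\qcoboundary\colon C^k\to C^{k+1}$, while the down-Laplacian uses $\qcoboundary\colon C^{k-1}\to C^k$. Composing $\qcoboundary^{\sbt}\qcoboundary_{\sbt}$ and substituting the two formulas reproduces \eqref{eq:hodge-up}, and composing $\qcoboundary_{\sbt}\qcoboundary^{\sbt}$ reproduces \eqref{eq:hodege-down}. I expect the only genuine obstacle to be the upper-adjoint bookkeeping: one must check that the ``fill with $1$'' behavior of $(\pi_\sigma)^{\sbt}$ interacts with the coordinate-wise meet precisely so as to turn the meet over all incident pairs into a single meet over the coboundary $\coboundary\sigma$ in each slot, without leaking contributions from faces $\sigma'\neq\sigma$ into slot $\sigma$.
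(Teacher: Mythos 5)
Your proof is correct and takes essentially the same approach as the paper's: the paper's entire proof is the remark that ``the definitions, Corollary \ref{thm:semi-additive}, and a few minor details suffice,'' and your stalk-by-stalk computation of $\qcoboundary_{\sbt}$ and $\qcoboundary^{\sbt}$ followed by composition is precisely those details written out. The one point you flag as delicate --- the off-slot $1$'s from $(\pi_\sigma)^{\sbt}$ being absorbed by the coordinate-wise meet --- is indeed the only subtlety, and you resolve it correctly.
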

\begin{proof}
The definitions, Corollary \ref{thm:semi-additive}, and a few minor details suffice.
\end{proof}

A return to the fixed point perspectives of \S\ref{sec:Tarski} suggests yet another candidate for $\cat{Ltc}$-valued sheaf cohomology.

\begin{definition}
For the pseudo-cochain complex of $\mathcal{F}$ as above, the \define{upper-} and \define{lower-Hodge cohomology} lattices of $\mathcal{F}$ are
\begin{equation}
  \HodHup^\bullet(\Space{X}; \mathcal{F}) = \mathrm{Post}(\Laplacian^{+}),
\end{equation}
\begin{equation}
  \HodHdown^\bullet(\Space{X}; \mathcal{F}) = \mathrm{Pre}(\Laplacian^{-}).
\end{equation}
\end{definition}

As a compression of cochains, this Hodge cohomology is less efficient than Tarski cohomology.
\begin{proposition}
Tarski cohomology is a quasi-sublattice of upper-Hodge cohomology.
\end{proposition}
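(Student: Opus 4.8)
The plan is to reduce the claim to a single pointwise comparison between the two operators, and then to push that comparison through the suffix-point (\textrm{Post}) description shared by both cohomologies. Comparing the degree-$k$ Tarski Laplacian \eqref{eq:laplacian-general} with the up-Laplacian \eqref{eq:hodge-up}, the \emph{only} difference is the inner aggregation over faces $\sigma' \in \boundary\tau$: it is a \emph{meet} for $\Laplacian_k$ and a \emph{join} for $\Laplacian^{+}$. The outer aggregation $\bigwedge_{\tau \in \coboundary\sigma}$ and the application of the upper adjoints $\resmapupper{F}{\sigma}{\tau}$ are identical in both.

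First I would establish that $\Laplacian_k \preceq \Laplacian^{+}$ in the pointwise order on endomorphisms of $C^k(\Space{X};\mathcal{F})$. For a fixed $k$-cell $\sigma$ and any coface $\tau \in \coboundary\sigma$, the index set $\boundary\tau$ is nonempty (it contains $\sigma$ itself), so the family $\setof{\resmaplower{F}{\sigma'}{\tau}(x_{\sigma'}) : \sigma' \in \boundary\tau}$ is a nonempty finite subset of the stalk $\mathcal{F}_\tau$, and its meet is bounded above by its join. Since each $\resmapupper{F}{\sigma}{\tau}$ is meet-preserving, hence order-preserving (Proposition~\ref{thm:aft}), applying the upper adjoint preserves this inequality, and taking the outer meet over $\tau \in \coboundary\sigma$ preserves it again. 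Hence $(\Laplacian_k\mathbf{x})_\sigma \preceq (\Laplacian^{+}\mathbf{x})_\sigma$ for every $\sigma$ and every $\mathbf{x}$ (the case $\coboundary\sigma = \emptyset$ being the trivial $1 \preceq 1$), giving $\Laplacian_k\mathbf{x} \preceq \Laplacian^{+}\mathbf{x}$ in the product order on $C^k$.

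Next I would transfer this to the cohomologies. In general, if $\Phi \preceq \Psi$ pointwise then $\mathrm{Post}(\Phi) \subseteq \mathrm{Post}(\Psi)$, since $\mathbf{x} \preceq \Phi\mathbf{x} \preceq \Psi\mathbf{x}$. Specializing to $\Phi = \Laplacian_k$ and $\Psi = \Laplacian^{+}$ yields the set inclusion $\TarH^k(\Space{X};\mathcal{F}) = \mathrm{Post}(\Laplacian_k) \subseteq \mathrm{Post}(\Laplacian^{+}) = \HodHup^k(\Space{X};\mathcal{F})$, and the same argument in each degree gives the graded inclusion.

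Finally, to upgrade from a set inclusion to a quasi-sublattice inclusion, I would invoke Lemma~\ref{thm:completeTH} (equivalently the Tarski Fixed Point Theorem applied to the order-preserving map $\id\meet\Laplacian_k$): $\TarH^k$ is a complete lattice in its own right under the order it inherits from $C^k(\Space{X};\mathcal{F})$, which is exactly the order it inherits as a subset of $\HodHup^k$. Since a quasi-sublattice is by definition merely a subset that forms a lattice under the induced order, this suffices. The point worth flagging is that this is genuinely a quasi-sublattice and \emph{not} a sublattice inclusion: the meets and joins computed inside $\TarH^k$ need not coincide with those computed inside $\HodHup^k$, which is precisely why the weaker notion is the correct one. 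I anticipate no serious obstacle here; all the content sits in the elementary comparison $\Laplacian_k \preceq \Laplacian^{+}$, i.e. meet $\preceq$ join together with monotonicity of the upper adjoints.
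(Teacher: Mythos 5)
Your proof is correct, but it takes a genuinely different route from the paper's. You establish the pointwise operator inequality $\Laplacian_k \preceq \Laplacian^{+}$ --- the inner meet over $\boundary\tau$ (nonempty since $\sigma \in \boundary\tau$) is bounded above by the inner join, and the order-preserving upper adjoints $\resmapupper{F}{\sigma}{\tau}$ and the outer meet preserve this --- and then conclude by monotonicity of $\mathrm{Post}$: if $\mathbf{x} \preceq \Laplacian_k\mathbf{x}$ then $\mathbf{x} \preceq \Laplacian^{+}\mathbf{x}$. The paper instead re-runs the adjunction argument of Theorem~\ref{thm:main} in degree $k$: for $\mathbf{x} \in \TarH^k$ one gets $\resmaplower{F}{\sigma}{\tau}(x_\sigma) \preceq \bigwedge_{\sigma' \in \boundary\tau}\resmaplower{F}{\sigma'}{\tau}(x_{\sigma'})$, whence by symmetry all lower restrictions into a common coface are \emph{equal}, $\resmaplower{F}{\sigma}{\tau}(x_\sigma) = \resmaplower{F}{\sigma'}{\tau}(x_{\sigma'})$ for $\sigma \fc \tau \fcop \sigma'$; the inner join then collapses to the common value and $(\Laplacian^{+}\mathbf{x})_\sigma = \bigwedge_{\tau \in \coboundary\sigma}\resmapupper{F}{\sigma}{\tau}\resmaplower{F}{\sigma}{\tau}(x_\sigma) \succeq x_\sigma$. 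Your route is shorter and more general (any pointwise inequality between order-preserving maps yields an inclusion of their $\mathrm{Post}$ sets); the paper's route yields a stronger structural fact in passing, namely that $\Laplacian^{+}$ and $\Laplacian_k$ actually \emph{coincide} on Tarski cohomology classes --- the lattice analogue of ``harmonic cochains have matching restrictions.'' Two minor notes: your quasi-sublattice upgrade via Lemma~\ref{thm:completeTH} carries a completeness hypothesis that the proposition leaves implicit, but the paper is no more careful (its proof establishes only the set inclusion); and your closing caveat is slightly overstated --- meets in $\TarH^k$ may indeed differ from those in $\HodHup^k$, but finite joins in both agree with the ambient ones (hence with each other), since the $\mathrm{Post}$ set of any order-preserving map is closed under ambient joins.
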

\begin{proof}
Suppose $\mathbf{x} \in \TarH^k(\Space{X}; \mathcal{F})$ and $\sigma\fc\tau\fcop\sigma'$ is a pair of $k$-cell faces of a common $(k+1)$-cell $\tau$. Then, following the proof of Theorem \ref{thm:main}, one has
%for every $\sigma \in \Space{X}_k$, $\tau \in \coboundary \tau$, $\sigma' \in \boundary \tau$,
\[
  \resmaplower{F}{\sigma}{\tau} (x_\sigma) 
  \preceq 
  \bigmeet_{\sigma' \in \boundary \tau} 
    \resmaplower{F}{\sigma'}{\tau} (x_{\sigma'}) 
  \preceq 
    \resmaplower{F}{\sigma'}{\tau} (x_{\sigma'}).
\]
By symmetry of $\sigma\fc\tau\fcop\sigma'$, this implies $\resmaplower{F}{\sigma}{\tau}(x_\sigma) = \resmaplower{F}{\sigma'}{\tau}(x_{\sigma'})$. Then,
\[
  \bigmeet_{\tau \in \coboundary \sigma} 
  \resmapupper{F}{\sigma}{\tau} 
  \left( 
    \bigjoin_{\sigma' \in \boundary \tau} 
    \resmaplower{F}{\sigma'}{\tau} (x_{\sigma'})
  \right) 
  = 
  \bigmeet_{\tau \in \coboundary \sigma} 
  \resmapupper{F}{\sigma}{\tau} 
  \resmaplower{F}{\sigma}{\tau} 
  (x_{\sigma}) 
  \succeq 
  x_\sigma
\]
so that $\mathbf{x} \in \HodHup^k(\Space{X};\mathcal{F})$.
\end{proof}

For an explicit example where Tarski cohomology is strictly contained in upper-Hodge cohomology, consider a constant sheaf on $\Space{X}$, a connected graph with three vertices ($u, v, w$) and two edges ($u \sim v$, $v \sim w$). The Tarski Laplacian is the endomorphism
\[
\Laplacian_0 \mathbf{x} = \left(x_u \meet x_v, x_u \meet x_v \meet x_w, x_v \meet x_w \right);
\]
the upper-Hodge laplacian is the endomorphism
\[
\Laplacian_0^{+} \mathbf{x} = \left( x_u \join x_v, (x_u \join x_v) \meet (x_v \join x_w), x_v \join x_w \right).
\]
$\TarH^0$ is the lattice of constant sections.
To see that $\TarH^0 \subsetneq \HodHup^0$, consider cochain $\mathbf{x} = (x, y, x)$ such that $y \prec x$. Then, $\Laplacian_0^{+} \mathbf{x} = (x, x, x) \succeq \mathbf{x}$. Thus, $\mathbf{x} \in \mathrm{Post}(\Laplacian_0^{+}) = \HodHup^0$, but is not a constant section.

% @@@@@@@@@@@@@@@@@@@@@@@@@@@@@@@@@@@@@@@@@@@@@@@@@@@@@@@@@@@@@@@@@@@@@@@@@@@@@@@@@@@@@@@@@@@@@@@@@@@@@@@@@@@@@@@@@
\section{Summary}
\label{sec:summary}

We close with Table \ref{table:1}, summarizing our constructions for cellular sheaves of lattices. The Grandis cohomology -- which is defined only when there is a cochain complex, such as in the case of factoring through $\cat{Vec}$ --- is the ``smallest'' cohomology, followed by the Tarski and then the Hodge theories:
\begin{equation}
  \GraH^\bullet \subset \TarH^\bullet \subset \HodHup^\bullet .
\end{equation} 

%-----------------------------------------------------------------------------------------------------
\begin{table}[h]
\resizebox{\textwidth}{!}{\begin{tabular}{ | c || c | c | c | c | c|} 
\hline
	& symb.\ & target\ & coboundary, $(\coboundary_k \mathbf{x})_\tau$ & cohomology & Laplacian, $(\Laplacian_k^{\pm} \mathbf{x})_{\sigma}$ \\
	\hline
	Cellular & 	$H^k$	&  $\cat{Hilb}$ & $\sum_{\sigma \fc \tau} \left[ \sigma:\tau \right] \mathcal{F}_{ \sigma \fc \tau}(x_\sigma)$ & $\Ker \coboundary_k / \Im \coboundary_{k-1}$ & $\left( \delta^\ast_k \delta_k + \delta_{k-1} \delta^\ast_{k-1} \right)_{\sigma}$ \\
	\hline
	Tarski & $\TarH^k$ & $\cat{Ltc}$ &  & $\mathrm{Post}(\Laplacian_k \mathbf{x})$ & $\bigwedge_{\tau \in \coboundary \sigma }  \resmapupper{F}{\sigma}{\tau}\left(\bigwedge_{\sigma' \in \boundary \tau} \resmaplower{F}{\sigma'}{\tau}(x_{\sigma'}) \right)$ \\
	\hline
	Grandis & $\GraH^k$ & $\cat{Mlc}$* & $\cat{Gr}(\coboundary)$ & $\cat{Gr}\left(H^k(\Space{X}; \cat{F})\right)$ & $x_\sigma \join (\Ker \coboundary_k)_\sigma$  \\
			&			&			&			& 	&$x_\sigma \meet (\Im \coboundary_{k-1})_\sigma$ \\
	\hline
	Hodge 	& $\HodHup^k$ & $\cat{Ltc}$ & $\left( \sum_{\sigma \fc \tau} \resmap{F}{\sigma}{\tau} \circ \pi_\sigma \right) \left( \mathbf{x} \right)$ & $\mathrm{Post}(\Laplacian_k^{+})$ & $\bigmeet_{\tau \in \coboundary \sigma} \resmapupper{F}{\sigma}{\tau} \left( \bigjoin_{\sigma' \in \boundary \tau} \resmaplower{F}{\sigma'}{\tau}(x_{\sigma'})\right)$ \\
				& $\HodHdown^k$ &  & & $\mathrm{Pre}(\Laplacian_k^{-})$ & $\bigjoin_{\rho \in \boundary \sigma} \resmaplower{F}{\rho}{\sigma} \left( \bigmeet_{\sigma' \in \coboundary \rho} \resmapupper{F}{\rho}{\sigma'}(x_{\sigma'}) \right)$ \\
	\hline
\end{tabular}}
\caption{Cohomologies and their Laplacians for cellular sheaves of lattices. \newline \footnotesize  *Grandis cohomology is defined for sheaves valued in $\cat{Mlc}$ whenever $\cat{Mlc}$ factors through $\cat{Vec}$.}
\label{table:1}
\end{table}

% @@@@@@@@@@@@@@@@@@@@@@@@@@@@@@@@@@@@@@@@@@@@@@@@@@@@@@@@@@@@@@@@@@@@@@@@@@@@@@@@@@@@@@@@@@@@@@@@@@@@@@@@@@@@@@@@@
% @@@@@@@@@@@@@@@@@@@@@@@@@@@@@@@@@@@@@@@@@@@@@@@@@@@@@@@@@@@@@@@@@@@@@@@@@@@@@@@@@@@@@@@@@@@@@@@@@@@@@@@@@@@@@@@@@
% BIBLIO

\end{document}